\newtheoremstyle{exampstyle}
  {\topsep} 
  {\topsep} 
  {\itshape} 
  {} 
  {\bfseries} 
  {.} 
  {.5em} 
  {} 
\theoremstyle{exampstyle}
\numberwithin{equation}{section}
\newtheorem{lem}{Lemma}[section]
\newtheorem{coro}[lem]{Corollary}
\newtheorem{theorem}{Theorem}[section]
\newtheorem{prob}{Problem}[section]
\newtheorem{remark}{Remark}[section]
\newtheorem{assumption}{Assumption}[section]
\let\oldref\ref
\renewcommand{\ref}[1]{(\oldref{#1})}  
\renewcommand{\eqref}[1]{(\oldref{#1})}
\newbox\boxaddrone \newbox\boxaddrtwo
\def\N+{n\in\mathbb{N}^{+}}
\def\re{\operatorname{Re}}
\def\l{\langle}
\def\L{\mathcal{L}}
\def\ro{\rangle_{L^2(\Omega)}}
\def\Dg{\mathcal{D}((-\Delta)^\gamma)}
\begin{document}

\title{\large\textbf{Unique determination of fractional order and source term  
in a fractional diffusion equation from sparse boundary data}}
\author[1]{Zhiyuan Li\thanks{zyli@sdut.edu.cn}}
\author[2]{Zhidong Zhang\thanks{zhidong.zhang@helsinki.fi}}
\affil[1]{\normalsize{School of Mathematics and Statistics, Shandong University 
of Technology, China}}
\affil[2]{\normalsize{Department of Mathematics and Statistics, 
University of Helsinki, Finland}}

\maketitle

\begin{abstract}
\noindent In this article, for a two dimensional fractional diffusion 
equation, we study an inverse problem for simultaneous restoration 
of the fractional order and the source term from the sparse boundary 
measurements. By the adjoint system corresponding to our diffusion 
equation, we construct useful quantitative relation between 
unknowns and measurements. From Laplace transform and the knowledge in complex analysis, the uniqueness theorem is proved. \\

\noindent AMS classification: 35R30, 35R11, 26A33.\\

\noindent Keywords: fractional diffusion equation, inverse problem,  nonlinearity, sparse measurements, uniqueness, multiple unknowns. 
\end{abstract}

\section{Introduction}

\subsection{Mathematical statement}
In this article, an inverse problem in the fractional diffusion equation $\partial_t^\alpha u-\Delta u=F(x,t)$ in $\mathbb R^2\times (0,\infty)$ is considered, 
where order $\alpha$ and source $F(x,t)$ are the unknowns. To recover $F(x,t)$, we will need the observation of $u$ on the whole domain 
$\mathbb R^2\times (0,\infty)$, which seems impossible in practice. 
So, most existing work focus on the time dependent or space dependent cases, 
namely, $F(x,t):=p(x)$ or $F(x,t):=q(t)$ or as a product 
$F(x,t):=p(x)q(t)$ where either $p$ or $q$ is known. 

We attempt to recover the source $F$ with more general formulation, 
$$F(x,t):=\sum\nolimits_{k=1}^K p_k(x)\chi_{{}_{t\in[c_{k-1},c_k)}},$$ 
with unknown $\{p_k,c_k\}$. This can be regarded as semi-discrete 
form of general $F(x,t)$, with piecewise constant discretization on time 
trace $t$. Note that we do not require $K$ to be finite, i.e. 
$K\in \mathbb N^+\cup\{\infty\}$, and the order $\alpha$ in $\partial_t^\alpha$ is also unknown. See below for a precise mathematical statement of 
this inverse problem. 

The mathematical model is:  
\begin{equation}
\label{eq-ibvp}
\left\{
\begin{alignedat}{2}
\partial_t^\alpha u - \Delta u &= \sum\nolimits_{k=1}^K 
p_k(x)\chi_{{}_{t\in[c_{k-1},c_k)}}, &\quad& (x,t) \in \Omega\times(0,\infty),\\
u(x,0)&=0, &\quad& x\in\Omega,\\ 
u(x,t) &= 0, &\quad& (x,t)\in \partial\Omega\times(0,\infty).
\end{alignedat}
\right.
\end{equation}
Here $\Omega$ is the unit disc in $\mathbb R^2$ and we choose 
$\partial_t^\alpha$ as the Djrbashyan-Caputo derivative of 
order $\alpha\in(0,1),$ defined by
$$
\partial_t^\alpha \psi(t) = \frac1{\Gamma(1-\alpha)} \int_0^t (t-\tau)^{-\alpha} \psi'(\tau)\ d\tau,\quad t>0.
$$

There are two common definitions of fractional derivatives, 
Riemann-Liouville version $D_t^\beta:=\frac{d^n}{dt^n}I^{n-\beta}$ and Djrbashyan-Caputo version 
$\partial_t^\beta:=I^{n-\beta}\frac{d^n}{dt^n}$, 
where the Riemann-Liouville fractional integral $I^\beta$ of 
order $\beta>0$ is defined as 
\begin{equation*}
\label{defi-RL}
I^\beta \psi(t) = \frac1{\Gamma(\beta)} \int_0^t (t-\tau)^{\beta-1} \psi(\tau)\ d\tau,\quad t>0,
\end{equation*}
$\Gamma(\cdot)$ is the Gamma function, and the positive integer $n$ satisfies 
$n-1\le\beta< n$. To better utilize the initial condition, we choose 
the Djrbashyan-Caputo definition $\partial_t^\alpha$ in this work.

In equation \eqref{eq-ibvp},  $\alpha,\ \{p_k(x)\}_{k=1}^K,
\ \{c_k\}_{k=0}^K,\ K\in \mathbb N^+\cup\{\infty\}$ are the unknowns. 
The order $\alpha$ can reflect some of the inhomogeneity of the medium, which with the source term usually can not be measured straightforwardly. Furthermore, in some cases, 
the observed points can not be set in the interior of the domain 
$\Omega.$ Namely, we can only obtain the information of the solution $u$ on a subset $Z_{ob}$ of the boundary $\partial\Omega$. Here, the boundary flux data $\frac{\partial u}{\partial\nu}$ is used, and $\nu$ is the unit outward normal vector of $\partial \Omega$. The interested inverse problem 
is stated below.  
\begin{prob} 
\label{prob-inver}
Given the boundary flux data 
$$\frac{\partial u}{\partial\nu} (z,t),\ t\in(0,\infty),
\ z\in Z_{ob}\subset\partial\Omega,$$ 
can we uniquely determine the order $\alpha$ of the fractional derivative, the spatial components $\{p_k(x)\}_{k=1}^K$ and the time mesh $\{c_k\}_{k=0}^K$ 
of the source term simultaneously? 
\end{prob}
This inverse problem contains several challenges. For instance, multiple 
unknowns, and the nonlinear relation between $p_k$ and 
$\chi_{{}_{t\in[c_{k-1},c_k)}}$. In addition, people 
always want the size of observed area $Z_{ob}$ as small as possible to save cost. 
This demand also increases the difficulty.

\subsection{Background and literature}
In the field of statistical mechanics, from Brownian motion, people can deduce the classical diffusion equation $u_t=Cu_{xx}$, where $u$ means the density of particles. Actually, we may generalize Brownian motion to continuous 
time random walk, in which the jump length and waiting time between two successive jumps will follow the given probability density functions, denoted by $\lambda(x)$ and $\psi(t)$, respectively. By the 
Central Limit Theorem, with the assumption that both the moments 
$\int_0^\infty t\psi(t) dt$ and $\int_{-\infty}^\infty x^2 \lambda(x) dx$ 
are finite, the long-time behavior of the continuous time random walk 
will correspond to Brownian motion again. 

However, within the last few decades, the collapse of the conditions 
$\int_0^\infty t\psi(t) dt<\infty$ or $\int_{-\infty}^\infty x^2 \lambda(x) dx<\infty$ were found in more and more anomalous diffusion processes.
For example, for a diffusive process in a heterogeneous medium, the 
particles may be absorbed to a low permeability zone which has a longer waiting time $\psi(t)\sim t^{-1-\alpha},\ t\to \infty,\ \alpha\in(0,1)$. In this case, it follows a subdiffusion process whose mean squared 
displacement $\langle x^2\rangle$ will be proportional to time $t^\alpha$ as $t$ large, instead of $\langle x^2\rangle\sim t$ in the situation of classical diffusion process.

To capture such anomalous diffusion processes, people introduced 
fractional derivatives into differential equations. 
By assuming the waiting 
time distribution $\psi(t)$ is independent of the jump length 
distribution $\lambda(x)$, and that $\psi(t)$ is of power-law distribution 
and $\lambda(x)$ obeys Gaussian distribution as time large, from an abstract 
point of view, \cite{MK00} derived the time-fractional 
diffusion equation in the framework of the continuous time random walk. 
In addition, the temporal fractional diffusion-advection 
equation is obtained by the time-changed Langevin equation with an 
inverse $\alpha$-stable subordinator in \cite{MWW07}. 
For other applications of fractional differential equations 
in anomalous diffusion phenomena and anomalous diffusion-like processes involving memory effects, see \cite{BKMS02,HH98,K08,Sok12,Uc13,BWM00, 
GGR92, LB03} and the references therein.

The inverse problems of determining the fractional order or the unknown 
source term in fractional diffusion equations are well studied and 
considerable results are generated. For the determination of the 
fractional order, one can consult \cite{C09, HNJ13,LIY15,LY15,LZJY13}. 
We refer to \cite{J16,KM11,SY11-MCRF,Z11,RundellZhang:2018} for recovering 
the spatial unknown in the source term, and \cite{L15,LiuZhang:2017} for the source 
with temporal unknown. For the extensive review, \cite{JR15,LLY-hfca,
LLY-hfca2, LY-hfca} are suggested. In addition, the inverse source 
problem in classical diffusion equation, in which the source term $p(x)q(t)$ contains two unknowns $p,q$ and $q$ is 
set to be a step function, is considered in \cite{RundellZhang:2019}.

\subsection{Main result and outline}
For Problem \ref{prob-inver}, we prove the uniqueness theorem 
and give a positive answer. Meanwhile, the size of measured area 
$Z_{ob}$ is limited to two appropriately 
chosen points, i.e. $Z_{ob}=\{z_1,z_2\}\subset\partial\Omega$. 
This reflects the sparsity in the title. 

Before stating the main theorem, we list several restrictions on 
the unknowns $\{p_k,c_k,\alpha\}$. 
\begin{assumption}
\label{condition}
\hfill
\begin{itemize}
\item [(a)] $K\in\mathbb{N}^+\cup \{\infty\},\ 0\le c_0<c_1\cdots <c_k<\cdots$ and $\exists \eta>0$ such that 
\\$ \inf\{|c_k-c_{k+1}|:k=0,\cdots,K-1\}\ge \eta;$  
\item [(b)] $\exists\gamma>0$ such that 
$p_k\in \Dg\subset L^2(\Omega),\ k=1,\cdots, K$, and \\
$\sum\nolimits_{k=1}^K p_k(x)\chi_{{}_{t\in[c_{k-1},c_k)}}\in 
L^1(0,\infty;\Dg)$; 
\item [(c)] $\|p_k\|_{L^2(\Omega)}\ne 0$ for $k=1,\cdots, K$, 
and $\|p_k-p_{k+1}\|_{L^2(\Omega)}\ne 0$ for $k=1,\cdots, K-1$.
\end{itemize}
\end{assumption}

\begin{remark}
The subspace $\Dg$ is defined in \eqref{D_gamma}. The condition\\ 
$\sum\nolimits_{k=1}^K p_k(x)\chi_{{}_{t\in[c_{k-1},c_k)}}\in 
L^1(0,\infty;\Dg)$ will lead to \\
$\sum\nolimits_{k=1}^K p_k(x)\chi_{{}_{t\in[c_{k-1},c_k)}}\in 
L^2(0,\infty;\Dg)$ by direct calculation and the fact \\
$\sum_{n=1}^\infty b_n^2 \le (\sum_{n=1}^\infty |b_n|)^2$.

 We set Assumption \ref{condition} (c) to make sure the source series 
 $\sum\nolimits_{k=1}^K p_k(x)\chi_{{}_{t\in[c_{k-1},c_k)}}$ can not 
 be simplified further. For example, assume that $\|p_{k_0}\|_{L^2(\Omega)}  =\|p_{k_1-1}-p_{k_1}\|_{L^2(\Omega)}=0$, then the source series can be rewritten as 
$$\sum\nolimits_{k\notin \{k_0, k_1-1,k_1\}} 
 p_k(x)\chi_{{}_{t\in[c_{k-1},c_k)}}+p_{k_1}(x) \chi_{{}_{t\in[c_{k_1-2},c_{k_1})}}.
$$
\end{remark}

Under Assumption \ref{condition}, we state the main theorem.
\begin{theorem}
\label{thm-unique}
Set $z_\ell=(\cos\theta_\ell,\sin\theta_\ell)\in \partial\Omega,\ \ell=1,2$ 
be the boundary observation points and suppose the following condition is fulfilled,   
\begin{equation}
\label{condi-z}
\theta_1 - \theta_2 \not\in \pi\mathbb Q,\ \mathbb Q
\ \text{is the set of rational numbers.}
\end{equation}
Let the two sets of unknowns $\{\alpha,c_0, p_k(x),c_k\}_{k=1}^K$ and 
$\{\tilde\alpha,\tilde c_0, \tilde p_k(x),\tilde c_k\}_{k=1}^{\tilde K}$ 
satisfy Assumption \ref{condition}, and assume that 
\begin{equation}\label{condition_alpha}
1/2<\alpha,\tilde\alpha<1.  
\end{equation}
Denote the solutions of equation \eqref{eq-ibvp} corresponding to the two sets 
of unknowns by $u$ and $\tilde u$, respectively. If  
\begin{equation*}
\label{condi-obser}
\frac{\partial u}{\partial\nu} (z_\ell,t)= \frac{\partial \tilde u}
{\partial\nu} (z_\ell,t),\ t\in(0,\infty),\ \ell=1,2,
\end{equation*}
then $\alpha=\tilde \alpha,\ c_0=\tilde c_0,\ K=\tilde K$ and 
$$\|p_k-\tilde p_k\|_{L^2(\Omega)}=0,\ c_k=\tilde c_k,\ k=1,\cdots, K.$$
\end{theorem}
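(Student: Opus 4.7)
The strategy is to Laplace-transform in time, reduce the measurement equality to an identity of analytic functions on the slit plane, separate angular modes using the two-point observation, and then read off each unknown parameter from the singularity structure. First, since $u(x,0)=0$, applying the Laplace transform to \eqref{eq-ibvp} converts the problem into the resolvent equation $(s^\alpha-\Delta)\hat u = \hat F(\cdot,s)$ with homogeneous Dirichlet data, where
\[
\hat F(x,s)=\frac{1}{s}\sum_{k=1}^{K}p_k(x)\bigl(e^{-sc_{k-1}}-e^{-sc_k}\bigr).
\]
Expanding in the Dirichlet eigenbasis $\{\phi_n,\lambda_n\}$ of $-\Delta$ and taking the normal derivative at $z_\ell\in\partial\Omega$ gives
\[
\partial_\nu\hat u(z_\ell,s)=\sum_n \frac{\hat F_n(s)\,\partial_\nu\phi_n(z_\ell)}{s^\alpha+\lambda_n},\qquad \hat F_n(s)=\frac{1}{s}\sum_k p_{k,n}\bigl(e^{-sc_{k-1}}-e^{-sc_k}\bigr),
\]
with $p_{k,n}=\langle p_k,\phi_n\rangle_{L^2(\Omega)}$. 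The hypothesis becomes equality of these Laplace transforms on a right half-plane, which extends to $\mathbb{C}\setminus(-\infty,0]$ by analytic continuation.

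Next, I would exploit the disc geometry. The eigenfunctions split as $\phi_{n,m,\pm}(r,\theta)\propto J_{|n|}(j_{|n|,m}r)\{\cos n\theta,\sin n\theta\}$ with eigenvalues $\lambda_{n,m}=j_{|n|,m}^2$, and the normal derivative at $z_\ell$ carries the angular factor $\cos n\theta_\ell$ or $\sin n\theta_\ell$. The condition $\theta_1-\theta_2\notin\pi\mathbb{Q}$ makes the $2\times 2$ angular matrix at every $n\geq 1$ invertible, so from the two measurements one isolates, for each angular index $n$ and each parity, the radial sum
\[
\mathcal H_n(s)=\sum_m \frac{\hat F_{n,m}(s)\,\kappa_{n,m}}{s^\alpha+\lambda_{n,m}},
\]
where $\kappa_{n,m}$ absorbs the Bessel weight $J_{|n|}'(j_{|n|,m})\neq 0$ and the normalization; the identity $\mathcal H_n(s)=\tilde{\mathcal H}_n(s)$ then holds on $\mathbb{C}\setminus(-\infty,0]$.

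To pin down $\alpha$ and $c_0$, I would examine the leading asymptotic of $\mathcal H_n(s)$ as $s\to+\infty$. Since $\hat F_{n,m}(s)\sim s^{-1}e^{-sc_0}p_{1,(n,m)}$ and $(s^\alpha+\lambda_{n,m})^{-1}\sim s^{-\alpha}$, one obtains $\mathcal H_n(s)\sim A_n\,s^{-\alpha-1}e^{-sc_0}$ with $A_n=\sum_m \kappa_{n,m}\,p_{1,(n,m)}$. Comparing with the $\tilde{}$ expression matches the unique exponential rate and the leading power of $s$, forcing $c_0=\tilde c_0$ and $\alpha=\tilde\alpha$ provided $A_n\neq 0$ for some $n$; this non-vanishing is extracted from Assumption~\ref{condition}(c), the completeness of the Fourier--Bessel basis, and the nontriviality of $p_1$. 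The restriction $\alpha>1/2$ is used here, and in the next step, to justify the interchange of summation with the large-$s$ limit and to handle the branch structure of $s^\alpha$ when one analytically continues to the second Riemann sheet.

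With $\alpha=\tilde\alpha$ and $c_0=\tilde c_0$ in hand, set $\Delta_{n,m}(s):=\hat F_{n,m}(s)-\hat{\tilde F}_{n,m}(s)$; each $\Delta_{n,m}$ is entire (the apparent $s=0$ pole cancels), and the identity reduces to $\sum_m \kappa_{n,m}\,\Delta_{n,m}(s)/(s^\alpha+\lambda_{n,m})\equiv 0$. Expanding $(s^\alpha+\lambda_{n,m})^{-1}=\sum_{j\geq 0}(-1)^j\lambda_{n,m}^j s^{-\alpha(j+1)}$ for large $s$ and matching successive powers of $s^{-\alpha j}$, together with a Vandermonde argument using the distinctness of the Bessel zeros $\{j_{|n|,m}^2\}_m$, forces $\Delta_{n,m}\equiv 0$ for every $(n,m)$. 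Since $s\Delta_{n,m}(s)$ is then a generalized Dirichlet series in the exponentials $\{e^{-sc_k},e^{-s\tilde c_k}\}$, Assumption~\ref{condition}(a) on the $\eta$-separation of mesh points together with the uniqueness of Dirichlet series yields $c_k=\tilde c_k$, $K=\tilde K$, and $p_{k,(n,m)}=\tilde p_{k,(n,m)}$ pairwise. Summing the Fourier--Bessel series finally gives $p_k=\tilde p_k$ in $L^2(\Omega)$. The principal obstacle I anticipate lies in the third paragraph: isolating $\alpha$ and $c_0$ cleanly requires the non-vanishing of some $A_n$, which rests on Assumption~\ref{condition}(c) and a careful use of the regularity $p_k\in\Dg$ from Assumption~\ref{condition}(b) to swap the series with the asymptotic limit.
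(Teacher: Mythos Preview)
Your step~2 contains a genuine gap. From two boundary points you obtain exactly two scalar identities,
\[
\sum_{n\in\mathbb Z}e^{in\theta_\ell}\,\mathcal H_n(s)=\sum_{n\in\mathbb Z}e^{in\theta_\ell}\,\tilde{\mathcal H}_n(s),\qquad \ell=1,2,
\]
and from these you cannot deduce $\mathcal H_n(s)=\tilde{\mathcal H}_n(s)$ for every angular index $n$: that is two equations for infinitely many unknowns. The $2\times2$ angular matrix you invoke is invertible for each fixed $n$, but the equations do not come to you mode-by-mode; they come as a full superposition over all $n$. In the paper the order of operations is the reverse of yours: one first isolates the contribution of a \emph{single eigenvalue} $\lambda_j$ by analytic continuation of the Laplace transform and a residue (pole) argument at $s^\alpha=-\lambda_j$; only then does Bourget's hypothesis guarantee that this residue involves a single angular index $|m|$, and the $2\times2$ system in $\theta_1,\theta_2$ finishes the job (Lemma~\ref{lemma_uniqueness_1}). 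The condition $\alpha>1/2$ is used precisely to make these poles $(-\lambda_j)^{1/\alpha}$ fall inside the chosen branch~$\Lambda$, so that the continuation reaches them.

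Because step~2 fails, your steps~3--4 inherit no per-mode identity to work with. Even heuristically they have problems. In step~3 the asymptotic $\mathcal H(s)\sim A\,s^{-\alpha-1}e^{-sc_0}$ identifies $c_0$ and $\alpha$ only if the leading coefficient $A=\sum_n a_n(z_\ell)p_{1,n}$ is nonzero; but this sum equals $-\partial_\nu v(z_\ell)$ for the Poisson solution $-\Delta v=p_1$, $v|_{\partial\Omega}=0$, and there is no reason this normal derivative cannot vanish at the two prescribed points while $p_1\neq0$. The paper bypasses this obstruction entirely: instead of matching leading asymptotics, it shows (via a Liouville argument on an entire function, Lemma~\ref{lem-exp_eigen2}) that $e^{\epsilon s}P_1^\ell(s)\to0$ forces \emph{every} Fourier coefficient $p_{1,n}$ to vanish, which contradicts Assumption~\ref{condition}(c). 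In step~4 your Vandermonde idea does not apply as stated, because the quantities $\Delta_{n,m}(s)$ carry the unknown exponentials $e^{-sc_k}$ and are not constants in the $s^{-\alpha j}$ expansion; matching powers of $s^{-\alpha}$ therefore does not decouple the radial indices.
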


The remaining part of this manuscript is structured as follows. 
Section 2 collects the preliminary knowledge, such as fractional calculus, 
the eigensystem of $-\Delta$ on $\Omega$ and the adjoint system of 
equation \eqref{eq-ibvp}. In Section 3, we build the measurement 
representation from Green's identities. After taking Laplace transform  on this representation, we give several auxiliary results and prove the main theorem, Theorem \ref{thm-unique}. Finally, the concluding remark is given in Section 4.

\section{Preliminaries} 

\subsection{Fractional calculus}
Letting $\alpha>0$ and $\beta\in\mathbb R$, the Mittag-Leffler function is defined as follows
$$
E_{\alpha,\beta}(y):= \sum_{k=0}^\infty \frac{y^k}{\Gamma(\alpha k + \beta)},\quad y\in\mathbb C.
$$
When $\alpha,\beta\in (0,\infty)$, $E_{\alpha,\beta}(\cdot)$ is an entire function. 

Since the Laplace transform of $E_{\alpha,\beta},\ \alpha\in(0,1)$ will generate the term 
$s^\alpha,\ s\in\mathbb C$, which is a multivalued function, we define 
the branch $\Lambda$ to make sure the analyticity, 
\begin{equation*}
\Lambda:=\{\rho e^{i\zeta}\in \mathbb C:\rho\in (0,\infty),\  
\zeta\in [0,2\pi)\},
\quad \Lambda^+:=\{s\in \Lambda:\re s>0\}.
\end{equation*} 

Now we give the lemma about Laplace transform of Mittag-Leffler function. 
\begin{lem}{(\cite[Proposition 4]{HelinLassasYlinenZhang:2019})}
\label{lem-lap-ml}
For $\lambda\ge 0,\ \alpha\in(0,1)$, 
$$
\mathcal L\Big\{t^{\alpha-1} E_{\alpha,\alpha}(-\lambda t^\alpha); s \Big\} 
=\frac{1}{s^\alpha+\lambda},\quad s\in\Lambda^+.
$$
\end{lem}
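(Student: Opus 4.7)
The plan is to compute the Laplace transform by expanding the Mittag-Leffler function into its defining power series, integrating term-by-term using the elementary identity $\mathcal{L}\{t^{\mu-1};s\}=\Gamma(\mu)/s^\mu$ for $\mu>0$ and $\re s>0$, and then summing a geometric series. Specifically, I would write
\begin{equation*}
t^{\alpha-1}E_{\alpha,\alpha}(-\lambda t^\alpha)=\sum_{k=0}^\infty\frac{(-\lambda)^k\,t^{\alpha k+\alpha-1}}{\Gamma(\alpha k+\alpha)},
\end{equation*}
apply the Laplace transform to each summand to get $(-\lambda)^k s^{-\alpha k-\alpha}$, and then recognize $\sum_{k\ge 0}(-\lambda)^k s^{-\alpha k-\alpha}=s^{-\alpha}/(1+\lambda s^{-\alpha})=1/(s^\alpha+\lambda)$.

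First I would fix the branch cut consistent with $\Lambda$ so that for $s=\rho e^{i\zeta}\in\Lambda^+$ we have $s^\alpha=\rho^\alpha e^{i\alpha\zeta}$, and observe that $\re s>0$ implies $\zeta\in(-\pi/2,\pi/2)$, hence $|\arg s^\alpha|<\alpha\pi/2<\pi/2$ and in particular $\re s^\alpha>0$. Next I would carry out the termwise Laplace transform in the subregion $\{s\in\Lambda^+:|s|^\alpha>\lambda\}$, where the interchange of sum and integral is justified by Fubini–Tonelli: using the crude bound $|E_{\alpha,\alpha}(-\lambda t^\alpha)|\le C e^{c t}$ (which follows from the known asymptotics of $E_{\alpha,\alpha}$ or simply from the trivial inequality $1/\Gamma(\alpha k+\alpha)\le C/\Gamma(k+1)$ up to a constant when $\alpha\in(0,1)$), the integrals $\int_0^\infty e^{-\re s\, t}t^{\alpha k+\alpha-1}|\lambda|^k/\Gamma(\alpha k+\alpha)\,dt$ form an absolutely convergent double sum whenever $\re s$ is sufficiently large and $|\lambda/s^\alpha|<1$. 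The geometric series then collapses to $1/(s^\alpha+\lambda)$.

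Finally I would extend the identity to the full domain $s\in\Lambda^+$ by analytic continuation: both sides are holomorphic on $\Lambda^+$ — the right-hand side because $\re s^\alpha>0>-\lambda$ rules out zeros of the denominator, and the left-hand side because $t^{\alpha-1}E_{\alpha,\alpha}(-\lambda t^\alpha)$ decays like $t^{-\alpha-1}$ at infinity (classical Mittag-Leffler asymptotics for $\alpha\in(0,1)$), making the Laplace integral an absolutely convergent, holomorphic function of $s$ on $\re s>0$, and thus on all of $\Lambda^+$ via the chosen branch. Since they agree on the open subset where $|s|^\alpha>\lambda$, the identity propagates to $\Lambda^+$ by the identity theorem.

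The main obstacle is the justification of the termwise Laplace transform when $|s^\alpha|\le\lambda$ (where the series $\sum(-\lambda)^k s^{-\alpha k-\alpha}$ diverges even though $1/(s^\alpha+\lambda)$ is perfectly finite). I would not try to push the series manipulation there directly; instead the analytic continuation step above does all the work, which is why it is essential to first verify holomorphy of the left-hand side from growth estimates on $E_{\alpha,\alpha}(-\lambda t^\alpha)$. If one prefers to avoid asymptotics of $E_{\alpha,\alpha}$, one can instead differentiate the known Laplace identity $\mathcal{L}\{E_\alpha(-\lambda t^\alpha);s\}=s^{\alpha-1}/(s^\alpha+\lambda)$ and use the relation $\frac{d}{dt}E_\alpha(-\lambda t^\alpha)=-\lambda t^{\alpha-1}E_{\alpha,\alpha}(-\lambda t^\alpha)$ together with $\mathcal{L}\{\psi'\}=s\mathcal{L}\{\psi\}-\psi(0)$, which yields the claim by a short algebraic manipulation.
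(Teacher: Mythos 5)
The paper does not prove this lemma at all: it is quoted from \cite[Proposition 4]{HelinLassasYlinenZhang:2019}, so there is no in-paper argument to compare yours against. Your proof is the standard one --- termwise Laplace transform of the power series defining $E_{\alpha,\alpha}$, geometric summation on the subregion where $(\re s)^\alpha>\lambda$ (where Tonelli applies), and analytic continuation to the rest of the half-plane --- and it is essentially correct. The alternative you sketch at the end, differentiating $\mathcal L\{E_{\alpha,1}(-\lambda t^\alpha);s\}=s^{\alpha-1}/(s^\alpha+\lambda)$ via $\frac{d}{dt}E_{\alpha,1}(-\lambda t^\alpha)=-\lambda t^{\alpha-1}E_{\alpha,\alpha}(-\lambda t^\alpha)$ (which is exactly the identity recorded in Lemma \oldref{mittagleffler_L1}), also works for $\lambda>0$; the case $\lambda=0$ is the elementary transform of $t^{\alpha-1}/\Gamma(\alpha)$.

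Two small corrections. First, your parenthetical justification of the growth bound is false: for $\alpha\in(0,1)$ one has $\Gamma(\alpha k+\alpha)\ll\Gamma(k+1)$ as $k\to\infty$, so $1/\Gamma(\alpha k+\alpha)\le C/\Gamma(k+1)$ does not hold. The correct statement is that $E_{\alpha,\alpha}$ is entire of order $1/\alpha$, i.e. $|E_{\alpha,\alpha}(z)|\le Ce^{c|z|^{1/\alpha}}$, which after substituting $z=-\lambda t^\alpha$ still yields the bound $Ce^{ct}$ you wanted; in any case Tonelli only requires the convergence of $\sum_k\lambda^k\int_0^\infty e^{-t\re s}\,t^{\alpha k+\alpha-1}\,dt/\Gamma(\alpha k+\alpha)=\sum_k\lambda^k(\re s)^{-\alpha k-\alpha}$, which you already have, and for holomorphy of the left-hand side the uniform bound $|E_{\alpha,\alpha}(-\lambda t^\alpha)|\le C$ from Lemma \oldref{lem-ml-asymp} suffices without invoking the $t^{-\alpha-1}$ asymptotics. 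Second, a branch caveat: you work with $\arg s\in(-\pi/2,\pi/2)$, i.e. the determination of $s^\alpha$ that is positive on $(0,\infty)$ and satisfies $\re s^\alpha>0$ on the right half-plane; that is indeed the determination for which the identity holds, but note that the paper's branch $\Lambda$ uses $\zeta\in[0,2\pi)$, under which $s^\alpha$ is discontinuous across the positive real axis, so the statement on $\Lambda^+$ must be read with the determination you chose rather than literally with $\zeta\in[0,2\pi)$.
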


The next two lemmas about Mittag-Leffler function will be used in the future proof. 
\begin{lem}{(\cite[Theorem 1.6]{P99})}
\label{lem-ml-asymp}
Let $0<\alpha<2$ and $\beta\in\mathbb R$. We suppose  
$\pi\alpha/2<\mu<\min\{\pi,\pi\alpha\}$, then there exists a constant $C=C(\alpha,\beta,\mu)>0$ such that
$$
|E_{\alpha,\beta}(z)| \le \frac{C}{1+|z|},\quad \mu\le |\arg(z)| \le \pi.
$$
\end{lem}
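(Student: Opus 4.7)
The plan is to derive this classical bound from the Hankel-contour representation of $E_{\alpha,\beta}$ combined with a one-term asymptotic expansion at infinity.

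First I would reduce to the unbounded regime $|z|\ge R_0$. On the compact set $\{z\colon \mu\le|\arg z|\le \pi,\ |z|\le R_0\}$ the entire function $E_{\alpha,\beta}$ is continuous and therefore bounded by some $M$; since $M\le M(1+R_0)/(1+|z|)$ on this set, the stated inequality holds there with constant $M(1+R_0)$. Thus it suffices to prove $|E_{\alpha,\beta}(z)|\le C|z|^{-1}$ for $|z|\ge R_0$.

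For the unbounded regime I would start from the Hankel representation
\begin{equation*}
E_{\alpha,\beta}(z)=\frac{1}{2\pi i\,\alpha}\int_{\mathcal H(\theta_0)}\frac{e^{\zeta^{1/\alpha}}\,\zeta^{(1-\beta)/\alpha}}{\zeta-z}\,d\zeta,
\end{equation*}
where $\mathcal H(\theta_0)$ is the Hankel contour composed of the two rays $\arg\zeta=\pm\theta_0$ (traversed from $\infty$ inward to a circle of radius $\epsilon$) joined by the circular arc $|\zeta|=\epsilon$. This representation is obtained by substituting the Hankel integral for $1/\Gamma(\alpha k+\beta)$ into the defining series of $E_{\alpha,\beta}$ and summing the resulting geometric series. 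The hypothesis $\pi\alpha/2<\mu<\min\{\pi,\pi\alpha\}$ is exactly what makes the interval $(\pi\alpha/2,\mu)$ nonempty and permits the choice of $\theta_0$ there: the condition $\theta_0>\pi\alpha/2$ guarantees $\cos(\theta_0/\alpha)<0$, so that $e^{\zeta^{1/\alpha}}$ decays exponentially as $|\zeta|\to\infty$ along the rays, while $\theta_0<\mu\le |\arg z|$ places $z$ strictly outside the contour, avoiding any residue contribution.

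Next, splitting the kernel as $\frac{1}{\zeta-z}=-\frac{1}{z}-\frac{\zeta}{z(\zeta-z)}$ and integrating the first piece explicitly via the Hankel formula for $1/\Gamma$ yields
\begin{equation*}
E_{\alpha,\beta}(z)=-\frac{1}{z\,\Gamma(\beta-\alpha)}+\mathcal R(z),\qquad \mathcal R(z)=\frac{1}{2\pi i\,\alpha\,z}\int_{\mathcal H(\theta_0)}\frac{\zeta\,e^{\zeta^{1/\alpha}}\,\zeta^{(1-\beta)/\alpha}}{\zeta-z}\,d\zeta.
\end{equation*}
A geometric computation exploiting the angular gap $|\arg\zeta\pm\arg z|\ge \mu-\theta_0>0$ gives the uniform lower bound $|\zeta-z|\ge c\max(|\zeta|,|z|)$ for every $\zeta\in\mathcal H(\theta_0)$ and every $z$ in the sector; combined with the exponential decay of $e^{\zeta^{1/\alpha}}$ on the rays this yields $|\mathcal R(z)|\le C|z|^{-2}$, so the leading $1/z$ term dominates and $|E_{\alpha,\beta}(z)|\le C|z|^{-1}$ for $|z|\ge R_0$. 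Gluing with the bounded-regime estimate produces the global bound $C/(1+|z|)$.

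The main obstacle is the uniform lower bound $|\zeta-z|\ge c\max(|\zeta|,|z|)$ with a constant $c$ depending only on $\alpha,\beta,\mu$, together with the exponential-decay estimate on the Hankel rays; both hinge on the geometric assumption $\pi\alpha/2<\mu<\min\{\pi,\pi\alpha\}$, which is precisely what separates the direction of $z$ from the rays of the contour while keeping the essential singularity of $e^{\zeta^{1/\alpha}}$ at infinity subcritical on those rays.
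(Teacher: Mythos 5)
The paper does not prove this lemma at all --- it is quoted verbatim from Podlubny \cite[Theorem 1.6]{P99} --- and your argument is precisely the standard proof of that cited result: the Hankel-contour representation of $E_{\alpha,\beta}$ valid for $z$ outside the contour, the one-term expansion $E_{\alpha,\beta}(z)=-z^{-1}/\Gamma(\beta-\alpha)+O(|z|^{-2})$ in the sector, and compactness for bounded $|z|$. The reasoning is sound (the choice $\theta_0\in(\pi\alpha/2,\mu)$ gives both $\cos(\theta_0/\alpha)<0$ on the rays and the angular gap $\mu-\theta_0$ yielding $|\zeta-z|\ge(1-\cos(\mu-\theta_0))\max(|\zeta|,|z|)$), up to one harmless sign slip: the splitting should read $\frac{1}{\zeta-z}=-\frac{1}{z}+\frac{\zeta}{z(\zeta-z)}$, which does not affect the modulus bound on $\mathcal R(z)$.
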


\begin{lem}{(\cite{Pollard:1948,SakamotoYamamoto:2011})}
\label{mittagleffler_L1}
For $\lambda,\alpha>0$, 
 \begin{equation*}
  \frac{d}{dt}E_{\alpha,1}(-\lambda t^\alpha)=-\lambda t^{\alpha-1}
  E_{\alpha,\alpha}(-\lambda t^\alpha),\ t>0,
 \end{equation*}
and $E_{\alpha,\alpha}(-\lambda t^\alpha)\ge 0$. 
Then $\|\lambda t^{\alpha-1} E_{\alpha,\alpha}(-\lambda t^\alpha)\|_{L^1(0,\infty)}=1$.
\end{lem}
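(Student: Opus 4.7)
The lemma has three assertions---the derivative identity, the non-negativity $E_{\alpha,\alpha}(-\lambda t^\alpha)\ge 0$, and the unit $L^1$ norm---and the plan is to settle them in that order. The derivative identity will follow from termwise differentiation of the defining power series; the non-negativity I would import from Pollard's classical complete-monotonicity theorem; and the $L^1$ computation then reduces to a telescoping argument together with the asymptotic decay supplied by Lemma \ref{lem-ml-asymp}.

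For the derivative, I start from
$$E_{\alpha,1}(-\lambda t^\alpha)=\sum_{k=0}^\infty\frac{(-\lambda)^k t^{\alpha k}}{\Gamma(\alpha k+1)},$$
which, as $E_{\alpha,1}$ is entire, converges absolutely and uniformly on compact subsets of $(0,\infty)$, so termwise differentiation is permitted. Using $\Gamma(\alpha k+1)=\alpha k\,\Gamma(\alpha k)$ for $k\ge 1$ and reindexing with $j=k-1$ yields
$$\frac{d}{dt}E_{\alpha,1}(-\lambda t^\alpha)=\sum_{k=1}^\infty\frac{(-\lambda)^k t^{\alpha k-1}}{\Gamma(\alpha k)}=-\lambda t^{\alpha-1}\sum_{j=0}^\infty\frac{(-\lambda t^\alpha)^j}{\Gamma(\alpha j+\alpha)}=-\lambda t^{\alpha-1}E_{\alpha,\alpha}(-\lambda t^\alpha).$$

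For the non-negativity, I would invoke Pollard's theorem \cite{Pollard:1948}: for $\alpha\in(0,1)$ the map $x\mapsto E_{\alpha,1}(-x)$ is completely monotone on $[0,\infty)$, so in particular its derivative is non-positive. Combined with the identity just proved (rewritten in the variable $x=\lambda t^\alpha$), this forces $\lambda t^{\alpha-1}E_{\alpha,\alpha}(-\lambda t^\alpha)\ge 0$ and hence $E_{\alpha,\alpha}(-\lambda t^\alpha)\ge 0$. For the $L^1$ identity, the derivative formula then exhibits the integrand as a total derivative, so
$$\int_0^\infty\lambda t^{\alpha-1}E_{\alpha,\alpha}(-\lambda t^\alpha)\,dt=-\bigl[E_{\alpha,1}(-\lambda t^\alpha)\bigr]_0^\infty=E_{\alpha,1}(0)-\lim_{t\to\infty}E_{\alpha,1}(-\lambda t^\alpha).$$
The first term equals $1/\Gamma(1)=1$, and the limit is $0$ by Lemma \ref{lem-ml-asymp} applied with $\beta=1$ and any admissible $\mu$, which gives $|E_{\alpha,1}(-\lambda t^\alpha)|\le C/(1+\lambda t^\alpha)$. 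Non-negativity of the integrand then lets me drop the absolute value, yielding $\|\lambda t^{\alpha-1}E_{\alpha,\alpha}(-\lambda t^\alpha)\|_{L^1(0,\infty)}=1$.

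The only non-routine ingredient is the positivity step: the series for $E_{\alpha,\alpha}(-\lambda t^\alpha)$ is alternating, so there is no elementary sign argument from the expansion. Pollard's theorem---equivalently, the Bernstein-type representation of $E_{\alpha,1}(-x)$ against a one-sided $\alpha$-stable density---is the cleanest way to resolve this, and I would simply quote it. The remaining work is bookkeeping with the power series and one application of the asymptotic bound from Lemma \ref{lem-ml-asymp}.
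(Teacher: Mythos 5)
Your proof is correct and follows exactly the route the paper intends: the lemma is quoted without proof from \cite{Pollard:1948,SakamotoYamamoto:2011}, and your three steps (termwise differentiation of the series, Pollard's complete monotonicity of $E_{\alpha,1}(-x)$ for the sign, and the fundamental theorem of calculus plus the decay bound of Lemma \ref{lem-ml-asymp} for the $L^1$ norm) reconstruct precisely the content of those references. One small remark: Pollard's theorem requires $\alpha\in(0,1]$, so the non-negativity claim does not hold for general $\alpha>0$ as the lemma's hypothesis loosely suggests, but this is a defect of the statement rather than of your argument, and the paper only ever uses $\alpha\in(0,1)$.
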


The lemmas below concern the fractional derivatives $\partial_t^\alpha$ and 
$D_t^\alpha$. 
\begin{lem}
\label{lem-ibp}
Let $v(x)\in L^2(\Omega)$ and $\psi(t)\in C(0,\infty)$ with  
$D_t^\alpha \psi(t)\in C(0,\infty),\ I^{1-\alpha}\psi(0)=0,\ \alpha\in(0,1)$. Then for $u$ in equation \eqref{eq-ibvp}, we have
$$
\int_0^t \int_\Omega \partial_t^\alpha u(x,\tau)v(x)\psi(t-\tau)\ dx
\ d\tau
=\int_0^t\int_\Omega  u(x,\tau)v(x)D_t^\alpha \psi(t-\tau)\ dx\ d\tau,\ t>0.
$$
\end{lem}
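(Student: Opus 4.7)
The plan is a direct computation: unfold the Caputo derivative on the left side into its convolution definition, swap the order of integration via Fubini, and then integrate by parts in time so that the derivative falls on $\psi$, producing the Riemann--Liouville derivative $D_t^\alpha\psi$. The hypotheses $u(x,0)=0$ and $I^{1-\alpha}\psi(0)=0$ will be used precisely to kill the boundary terms.

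More concretely: first, using $v\in L^2(\Omega)$ together with Fubini in the spatial/temporal variables, I would pull $v(x)$ out and reduce the identity to showing, for each fixed $x$,
\begin{equation*}
\int_0^t \partial_\tau^\alpha u(x,\tau)\,\psi(t-\tau)\,d\tau = \int_0^t u(x,\tau)\,D_t^\alpha\psi(t-\tau)\,d\tau.
\end{equation*}
Writing out $\partial_\tau^\alpha u(x,\tau) = \frac{1}{\Gamma(1-\alpha)}\int_0^\tau (\tau-s)^{-\alpha} u_s(x,s)\,ds$ and applying Fubini to the resulting double integral in $(\tau,s)$ (justified because $\alpha<1$ makes the kernel integrable and $u_s$ is locally integrable by the regularity theory for the equation \eqref{eq-ibvp}), the integral over $\tau\in[s,t]$ with the change of variable $\sigma=\tau-s$ turns into $\Gamma(1-\alpha)\,I^{1-\alpha}\psi(t-s)$. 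This reduces the left side to
\begin{equation*}
\int_0^t u_s(x,s)\,I^{1-\alpha}\psi(t-s)\,ds.
\end{equation*}

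Next I would integrate by parts in $s$. The boundary term at $s=0$ vanishes because $u(x,0)=0$, and the boundary term at $s=t$ vanishes because $I^{1-\alpha}\psi(0)=0$ by hypothesis. This yields
\begin{equation*}
-\int_0^t u(x,s)\,\tfrac{d}{ds}\bigl[I^{1-\alpha}\psi(t-s)\bigr]\,ds = \int_0^t u(x,s)\,\bigl(\tfrac{d}{d\sigma}I^{1-\alpha}\psi\bigr)(t-s)\,ds,
\end{equation*}
and the bracketed derivative is exactly $D_t^\alpha\psi(t-s)$ by the definition of the Riemann--Liouville derivative. Reinstating the integration against $v(x)$ gives the claim.

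The main obstacle I anticipate is not conceptual but technical: justifying Fubini and the integration by parts rigorously. Fubini requires enough integrability of $u_s(x,\cdot)$ against the weakly singular kernel $(\tau-s)^{-\alpha}$, and the integration by parts requires that $s\mapsto I^{1-\alpha}\psi(t-s)$ be absolutely continuous with the prescribed boundary values; the former follows from the regularity of the solution to \eqref{eq-ibvp} under Assumption \ref{condition}, while the latter is exactly what the hypotheses $D_t^\alpha\psi\in C(0,\infty)$ and $I^{1-\alpha}\psi(0)=0$ are designed to provide.
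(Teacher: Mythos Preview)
Your proposal is correct and follows essentially the same route as the paper: transfer the fractional integral $I^{1-\alpha}$ from $u$ to $\psi$ via Fubini in the convolution, then integrate by parts in time, with the boundary terms killed by $u(\cdot,0)=0$ and $I^{1-\alpha}\psi(0)=0$. The only cosmetic difference is that the paper pairs with $v$ at the outset and works with the scalar function $t\mapsto\langle u(\cdot,t),v\rangle_{L^2(\Omega)}$ (invoking the weak definition $\langle\partial_t^\alpha u,v\rangle=\partial_t^\alpha\langle u,v\rangle$) rather than arguing ``for each fixed $x$'', which is the cleaner framing since $\partial_t^\alpha u$ is only defined in the $L^2(\Omega)$ sense.
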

\begin{proof} 
\cite{GLY,HelinLassasYlinenZhang:2019} provided the well-definedness of $\partial_t^\alpha u$ in equation \eqref{eq-ibvp} in the sense of $L^2(\Omega)$ and defined it as  
$$
\l\partial_t^\alpha u(\cdot,t),v(\cdot)\ro
=\partial_t^\alpha \l u(\cdot,t),v(\cdot)\ro. 
$$
Then it follows that
\begin{align*}
\int_0^t \l\partial_t^\alpha u(\cdot,\tau),v(\cdot)\ro\ \psi(t-\tau)\ d\tau
=&\int_0^t I^{1-\alpha}\frac{d}{d\tau}\l u(\cdot,\tau),v(\cdot)\ro\ \psi(t-\tau)\ d\tau
\\
=& \int_0^t \frac{d}{d\tau}\l u(\cdot,\tau),v(\cdot)\ro\ I^{1-\alpha}\psi(t-\tau)\ d\tau,\ t>0,
\end{align*}
where Fubini's Theorem is used in the last equality. 
Using integration by parts implies
$$
\int_0^t \l\partial_t^\alpha u(\cdot,\tau),v(\cdot)\ro\ \psi(t-\tau)\ d\tau
=\int_0^t  \l u(\cdot,\tau),v(\cdot)\ro\ D_t^\alpha \psi(t-\tau)\ d\tau,\ t>0,
$$
in view of the assumption $u(x,0)=I^{1-\alpha}\psi(0)=0$. We finish the proof.
\end{proof}

\begin{lem}{(\cite[Example 4.3]{P99})}
\label{lem-ode}
For $0<\alpha<1$ and $\lambda>0$, we have $\psi(t):=t^{\alpha-1} 
E_{\alpha,\alpha}(-\lambda t^\alpha)$ is the unique solution to the 
time-fractional ordinary differential equation
$$
D_t^\alpha \psi(t) + \lambda \psi(t) = 0,\quad t>0,
$$
with the initial condition 
$$
\lim_{t\to0}I^{1-\alpha} \psi(t) = 1.
$$
\end{lem}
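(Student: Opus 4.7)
The plan is to lift the pointwise boundary identity to an analytic identity in the Laplace domain, then extract the unknowns by asymptotic analysis and linear independence of exponentials.

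First, I would expand the solution of \eqref{eq-ibvp} in the Dirichlet eigenbasis $\{\phi_{n,m}\}$ of $-\Delta$ on the unit disc (products of Bessel and trigonometric functions, with eigenvalues $\lambda_{n,m}$). Combining Duhamel's formula---or equivalently applying Lemma \ref{lem-ibp} with $v=\phi_{n,m}$ and $\psi(t)=t^{\alpha-1}E_{\alpha,\alpha}(-\lambda_{n,m}t^\alpha)$, which by Lemma \ref{lem-ode} solves the adjoint fractional ODE---with the Laplace formula of Lemma \ref{lem-lap-ml}, I obtain the representation
\begin{equation*}
\widehat{\partial_\nu u}(z_\ell, s)=\sum_{n,m}\frac{\partial_\nu\phi_{n,m}(z_\ell)}{s^\alpha+\lambda_{n,m}}\sum_{k=1}^K\langle p_k,\phi_{n,m}\rangle\frac{e^{-c_{k-1}s}-e^{-c_k s}}{s},\qquad s\in\Lambda^+,
\end{equation*}
and the analogous expression with tildes. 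The measurement hypothesis then gives equality of these Laplace transforms for every $s\in\Lambda^+$ and $\ell=1,2$, which by standard analyticity extends to a larger domain in $\mathbb{C}$.

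To identify $\alpha$, I would examine the behavior as $s\to+\infty$ along the positive real axis. Each kernel $1/(s^\alpha+\lambda_{n,m})$ expands as $s^{-\alpha}-\lambda_{n,m}s^{-2\alpha}+\cdots$, and among the exponentials only $e^{-c_0 s}$ (weighted by $\langle p_1,\phi_{n,m}\rangle$) survives as the slowest decaying, yielding a formal leading asymptotic of the form $e^{-c_0 s}s^{-\alpha-1}\mathcal{B}_\ell(p_1)$ for a boundary-type functional $\mathcal{B}_\ell$. Matching this leading term against the tilded counterpart, and invoking Assumption \ref{condition}(c) together with the non-resonance condition $\theta_1-\theta_2\not\in\pi\mathbb{Q}$ (to prevent the leading functional from vanishing simultaneously at both $\ell=1$ and $\ell=2$), forces $c_0=\tilde c_0$ and $\alpha=\tilde\alpha$.

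With $\alpha=\tilde\alpha$ established, I would iteratively recover the mesh points $c_k$ and the profiles $p_k$. Subtracting the two Laplace identities and multiplying through by $s^\alpha+\lambda_{n,m}$ for selected modes isolates a finite sum $\sum_j a_j e^{-d_j s}=0$; by the classical linear independence of $\{e^{-ds}\}_{d\geq 0}$ on any nonempty open subset of $\mathbb{C}$, each coefficient $a_j$ vanishes and the exponents must coincide. The non-resonance condition enters sharply here: the matrix $\begin{pmatrix}\cos n\theta_1 & \sin n\theta_1 \\ \cos n\theta_2 & \sin n\theta_2\end{pmatrix}$ has determinant $\sin n(\theta_2-\theta_1)$, which is nonzero for every $n\geq 1$ precisely because $(\theta_1-\theta_2)/\pi\not\in\mathbb{Q}$, so two boundary observations suffice to distinguish the sine and cosine angular sectors for each $n$. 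This produces $c_1=\tilde c_1$ and $p_1=\tilde p_1$ in $L^2(\Omega)$; an induction on $k$, using Assumption \ref{condition}(a),(c) to guarantee non-degeneracy at each step, then gives $K=\tilde K$ together with equality of all remaining $c_k$ and $p_k$.

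The main obstacle will be rigorously justifying the termwise manipulations of the infinite double series over $(n,m)$: both the asymptotic expansion as $s\to+\infty$ and the exponential separation argument require uniform convergence in the relevant complex domain, along with an interchange of summation and Laplace transform. Here one needs the regularity from Assumption \ref{condition}(b) together with the Mittag-Leffler decay of Lemma \ref{lem-ml-asymp}, and in particular the restriction $\alpha>1/2$ is expected to be essential for securing enough decay of the resolvent and of $\langle p_k,\phi_{n,m}\rangle$ to legitimize the term-by-term analysis.
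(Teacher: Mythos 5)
Your proposal does not address the statement you were asked to prove. The statement is Lemma \ref{lem-ode}: that $\psi(t)=t^{\alpha-1}E_{\alpha,\alpha}(-\lambda t^\alpha)$ is the unique solution of the Riemann--Liouville fractional ODE $D_t^\alpha\psi+\lambda\psi=0$ with $\lim_{t\to0}I^{1-\alpha}\psi(t)=1$. What you have written is instead an outline of the proof of the main uniqueness theorem (Theorem \ref{thm-unique}) for the inverse problem --- eigenfunction expansion of $u$, Laplace transform of the boundary flux, asymptotics as $\re s\to\infty$, separation of exponentials, and the non-resonance condition on $\theta_1-\theta_2$. None of that is relevant to the lemma. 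Worse, your argument explicitly \emph{invokes} Lemma \ref{lem-ode} (``which by Lemma \ref{lem-ode} solves the adjoint fractional ODE'') as a known fact, so you are assuming the very statement you were supposed to establish.

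For the record, the paper does not prove this lemma either; it cites it from Podlubny \cite[Example 4.3]{P99}. A self-contained proof is a short computation: writing $\psi(t)=\sum_{k\ge0}(-\lambda)^k t^{\alpha k+\alpha-1}/\Gamma(\alpha k+\alpha)$ and applying $D_t^\alpha$ termwise via $D_t^\alpha t^{\beta-1}=\frac{\Gamma(\beta)}{\Gamma(\beta-\alpha)}t^{\beta-\alpha-1}$ (the $k=0$ term is annihilated since $\Gamma(0)^{-1}=0$, and the series reindexes to $-\lambda\psi$); the initial condition follows from $I^{1-\alpha}\psi(t)=E_{\alpha,1}(-\lambda t^\alpha)\to1$; and uniqueness follows by taking Laplace transforms of the ODE, using $\mathcal L\{D_t^\alpha\psi\}=s^\alpha\mathcal L\{\psi\}-I^{1-\alpha}\psi(0^+)$, which forces $\mathcal L\{\psi\}=(s^\alpha+\lambda)^{-1}$ and hence $\psi$ by injectivity of the Laplace transform. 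Your proposal contains no trace of this argument, so it cannot be credited as a proof of the lemma.
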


\subsection{Dirichlet eigensystem of $-\Delta$ and regularity of solution $u$}
The eigensystem $\{(\lambda_n,\varphi_n)\}_{n=1}^\infty$ 
(multiplicity counted) of the operator 
$-\Delta$ on $\Omega$ with Dirichlet boundary condition is defined as follows: 
$$
0<\lambda_1\le \cdots \le \lambda_n\le \cdots 
\to\infty,\quad \mbox{as $n\to \infty$,}
$$
and $\varphi_n$ denotes the corresponding eigenfunction 
\begin{equation}\label{eigenfunction}
\varphi_n(r,\theta)=\omega_nJ_{|m(n)|}(\lambda_n^{1/2}r)e^{im(n)\theta },
\ n\in\mathbb N^+,
\end{equation}
which form an orthonormal basis of $L^2(\Omega)$. 
Here $(r,\theta)$ are the polar coordinates on $\Omega$, and 
$J_{|m(n)|}(\cdot)$ is the Bessel function of order $|m(n)|$ 
with $\lambda_n^{1/2}$ as its zero point. The Bessel orders $m$ 
depend on the choice of $n$ and we use the notation $m(n)$ to show 
the dependence (sometimes we may use $J_m$ for short). 
\begin{remark}
$\{\omega_n\}$ are the normalized coefficients to make sure 
$\|\varphi_n\|_{L^2(\Omega)}=1$.  
From Bourget's hypothesis, proved in \cite{Siegel:2014}, 
there exist no common positive zeros between two Bessel functions 
with different nonnegative integer orders. Also recall that 
$J_{-m}(r)=(-1)^mJ_m(r)$, given an 
eigenvalue $\lambda_{n_0}$, $\lambda_{n_0}^{1/2}$ can only be the zero 
of $J_{\pm m(n_0)}(\cdot)$. Hence the multiplicity for $\lambda_{n_0}$ 
is two if $m(n_0)$ is nonzero, otherwise, it will be one. 

In the case of $m(n_0)\ne 0$, by setting 
$\lambda_{n_0}=\lambda_{n_0+1}$, the corresponding eigenpairs are given as 
$$(\lambda_{n_0}, \omega_{n_0}J_{|m(n_0)|}(\lambda_{n_0}^{1/2}r)e^{i|m(n_0)|\theta }), 
\quad (\lambda_{n_0+1}, \omega_{n_0+1}J_{|m(n_0)|}(\lambda_{n_0+1}^{1/2}r)e^{-i|m(n_0)|\theta }).$$
Now setting $m(n_0)=|m(n_0)|=-m(n_0+1)$, the representation \eqref{eigenfunction} is consistency and $m$ is uniquely determined by the value of $n$. See \cite{GrebenkovNguyen:2013} for details about the structure of $\{\varphi_n\}_{n=1}^\infty$.  
\end{remark}
With $\{(\lambda_n,\varphi_n)\}_{n=1}^\infty$, we can 
define the subspace $\Dg\subset L^2(\Omega),\ \gamma>0$ as:
\begin{equation}\label{D_gamma}
 \Dg:=\Big\{\psi\in L^2(\Omega):\sum_{n=1}^\infty \lambda_n^{2\gamma}
 \big|\l \psi(\cdot),\varphi_n(\cdot)\ro\big|^2<\infty\Big\}.
\end{equation}
Here $\l\cdot,\cdot\ro$ is the inner product in $L^2(\Omega)$. Since 
$\Omega$ is the unit disc in $\mathbb R^2$, $\Dg\subset H^{2\gamma}(\Omega)$.

The next lemma concerns the values of the normalized parameters 
$\{\omega_n\}$.
\begin{lem}\label{omega}
$\omega_n$ has the form
$\omega_n = \pi^{-1/2} \left[J_{|m(n)|+1}(\lambda_n^{1/2}) \right]^{-1},\  n\in\mathbb N^+.$
\end{lem}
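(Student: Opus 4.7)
The plan is a direct normalization computation. Since $\varphi_n(r,\theta)=\omega_n J_{|m(n)|}(\lambda_n^{1/2}r)e^{im(n)\theta}$ and $\Omega$ is the unit disc, I would work in polar coordinates and compute
\[
1=\|\varphi_n\|_{L^2(\Omega)}^2 = \omega_n^2\int_0^{2\pi}\!\!\int_0^1 \bigl|J_{|m(n)|}(\lambda_n^{1/2}r)\bigr|^2\,r\,dr\,d\theta
= 2\pi\,\omega_n^2 \int_0^1 r\,J_{|m(n)|}^2(\lambda_n^{1/2}r)\,dr,
\]
because the angular factor $|e^{im(n)\theta}|^2 = 1$ integrates to $2\pi$ and the radial integrand is real.

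The one nonroutine ingredient is evaluating the radial integral at a Dirichlet eigenvalue. I would invoke the standard identity
\[
\int_0^1 r\,J_m^2(\alpha r)\,dr
= \tfrac12\bigl[J_m'(\alpha)\bigr]^2 + \tfrac12\bigl(1-m^2/\alpha^2\bigr)J_m^2(\alpha),
\]
valid for all $\alpha>0$ and integer $m\ge 0$, and specialize to $\alpha=\lambda_n^{1/2}$, which is a zero of $J_{|m(n)|}$ by the definition of the Dirichlet eigensystem. The boundary term vanishes, and the recurrence $J_m'(\alpha)=\tfrac{m}{\alpha}J_m(\alpha)-J_{m+1}(\alpha)$ reduces to $J_{|m(n)|}'(\lambda_n^{1/2})=-J_{|m(n)|+1}(\lambda_n^{1/2})$. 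Substituting yields
\[
\int_0^1 r\,J_{|m(n)|}^2(\lambda_n^{1/2}r)\,dr
= \tfrac12\bigl[J_{|m(n)|+1}(\lambda_n^{1/2})\bigr]^2.
\]

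Plugging this into the normalization equation gives $\pi\,\omega_n^2\bigl[J_{|m(n)|+1}(\lambda_n^{1/2})\bigr]^2=1$. Choosing the positive square root (which only fixes an overall phase of $\varphi_n$ and is consistent with the convention adopted in the paper), I would conclude
\[
\omega_n = \pi^{-1/2}\bigl[J_{|m(n)|+1}(\lambda_n^{1/2})\bigr]^{-1}.
\]
There is no real obstacle here; the only subtlety is to remember that $J_{|m(n)|+1}(\lambda_n^{1/2})\neq 0$, which follows from Bourget's hypothesis quoted in the preceding remark (two Bessel functions of distinct nonnegative integer order share no positive zero), so the reciprocal is well defined.
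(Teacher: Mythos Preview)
Your proof is correct and follows essentially the same route as the paper: set up the normalization in polar coordinates, integrate the angular part to $2\pi$, and evaluate the radial Bessel integral using the Dirichlet condition $J_{|m(n)|}(\lambda_n^{1/2})=0$. The only cosmetic difference is that the paper computes the radial integral by exhibiting an explicit antiderivative from the recurrences \eqref{bessel}, whereas you quote the standard Lommel-type identity directly; the content is identical.
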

\begin{proof} 
Firstly we list some properties of Bessel functions, 
\begin{equation}\label{bessel}
 \begin{aligned}
   2mJ_m(x)/x&=J_{m-1}(x)+J_{m+1}(x),\\
  2[J_m(x)]'&=J_{m-1}(x)-J_{m+1}(x),\\
  [x^{m+1}J_{m+1}(x)]'&=x^{m+1}J_m(x).
 \end{aligned}
\end{equation}
 Since $\|\varphi_n\|_{L^2(\Omega)}=1,$ then 
 \begin{equation*}
  \omega_n^2\ \Big[\int_0^{2\pi} |e^{i m(n)\theta }|^2\ d\theta\Big] 
  \ \Big[\int_0^1 J_{|m|}^2(\lambda_n^{1/2}r)r\ dr\Big]=1.
 \end{equation*}
Not hard to see that $\int_0^{2\pi} |e^{i m(n)\theta }|^2\,d\theta=2\pi$. 
Also, with \eqref{bessel} and the fact that $\lambda_n^{1/2}$ is the 
zero of $J_{|m|}(r)$,
\begin{equation*}
\begin{aligned}
 \int_0^1 J_{|m|}^2(\lambda_n^{1/2}r)r\ dr&=\lambda_n^{-1}\int_0^{\lambda_n^{1/2}} 
 J_{|m|}^2(r)r \ dr \\
 &=\lambda_n^{-1}\Big[r^2J_{|m|+1}^2(r)/2+r^2J_{|m|}^2(r)/2
 -|m|rJ_{|m|}(r)J_{|m|+1}(r)\Big]\Big|_0^{\lambda_n^{1/2}}\\
 &=J_{|m|+1}^2(\lambda_n^{1/2})/2.
 \end{aligned}
\end{equation*}
Hence, $\omega_n=\pi^{-1/2}\left[J_{|m(n)|+1}(\lambda_n^{1/2}) \right]^{-1}$ 
and the proof is complete. 
\end{proof}

The lemma and remark below will be used in the future proof, which concern some regularity estimates of solution $u$ in equation \eqref{eq-ibvp}.
\begin{lem}{(\cite{SakamotoYamamoto:2011})}
\label{lem-ibvp}
Under Assumption \ref{condition}, equation \eqref{eq-ibvp}
admits a unique weak solution $u\in L^2(0,\infty;\mathcal 
D((-\Delta)^{\gamma+1}))$.
\end{lem}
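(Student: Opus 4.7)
The plan is to reduce the PDE to a countable family of fractional ODEs by spectral expansion in the Dirichlet eigenbasis $\{\varphi_n\}$ of Section 2.2, solve each via the Mittag-Leffler representation, and then sum using the $L^1$ bound of Lemma \ref{mittagleffler_L1} together with Young's convolution inequality.

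First I would write the formal candidate
\begin{equation*}
u(x,t)=\sum_{n=1}^\infty u_n(t)\,\varphi_n(x),\qquad
u_n(t)=\l u(\cdot,t),\varphi_n\rd,
\end{equation*}
and note that pairing equation \eqref{eq-ibvp} with $\varphi_n$ and using the definition of $\partial_t^\alpha$ on scalar time traces (as in the proof of Lemma \ref{lem-ibp}) yields, for each $n$, the scalar fractional ODE
\begin{equation*}
\partial_t^\alpha u_n(t)+\lambda_n u_n(t)=F_n(t),\qquad u_n(0)=0,
\end{equation*}
where $F_n(t):=\l F(\cdot,t),\varphi_n\rd$ and $F(x,t)=\sum_{k=1}^K p_k(x)\chi_{{}_{t\in[c_{k-1},c_k)}}$. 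By Lemma \ref{lem-ode} (Duhamel principle for fractional ODEs), the unique solution is the convolution
\begin{equation*}
u_n(t)=\int_0^t (t-\tau)^{\alpha-1}E_{\alpha,\alpha}(-\lambda_n(t-\tau)^\alpha)\,F_n(\tau)\,d\tau=:(g_n*F_n)(t),
\end{equation*}
with $g_n(s):=s^{\alpha-1}E_{\alpha,\alpha}(-\lambda_n s^\alpha)$.

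Next I would estimate each $u_n$ in $L^2(0,\infty)$. Lemma \ref{mittagleffler_L1} gives $\|\lambda_n g_n\|_{L^1(0,\infty)}=1$, hence $\|g_n\|_{L^1(0,\infty)}=\lambda_n^{-1}$. Young's inequality ($L^1*L^2\hookrightarrow L^2$) then yields
\begin{equation*}
\|u_n\|_{L^2(0,\infty)}\le \|g_n\|_{L^1(0,\infty)}\|F_n\|_{L^2(0,\infty)}=\lambda_n^{-1}\|F_n\|_{L^2(0,\infty)}.
\end{equation*}
Multiplying by $\lambda_n^{2(\gamma+1)}$ and summing,
\begin{equation*}
\sum_{n=1}^\infty \lambda_n^{2(\gamma+1)}\|u_n\|_{L^2(0,\infty)}^2
\le \sum_{n=1}^\infty \lambda_n^{2\gamma}\|F_n\|_{L^2(0,\infty)}^2
=\|F\|_{L^2(0,\infty;\Dg)}^2.
\end{equation*}
By Assumption \ref{condition}(b) together with the remark that $L^1(0,\infty;\Dg)$ membership of the piecewise-constant-in-time source upgrades to $L^2(0,\infty;\Dg)$ (via $\sum b_n^2\le(\sum|b_n|)^2$ applied on each time subinterval), the right-hand side is finite. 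By definition \eqref{D_gamma} this is exactly $\|u\|_{L^2(0,\infty;\mathcal D((-\Delta)^{\gamma+1}))}^2<\infty$, and Fubini/monotone convergence justifies exchanging sum and integral so that the formal series indeed defines an element of $L^2(0,\infty;\mathcal D((-\Delta)^{\gamma+1}))$.

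Finally, to get a weak solution one tests the truncated partial sums $u^{(N)}=\sum_{n\le N} u_n\varphi_n$ against any $\varphi\in\mathcal D((-\Delta)^{\gamma+1})$, uses Lemma \ref{lem-ibp} to move $\partial_t^\alpha$ onto a smooth test function in time, and passes to the limit $N\to\infty$ using the summability just established. Uniqueness is automatic from linearity: the difference $w$ of two weak solutions satisfies the homogeneous problem, so its Fourier coefficients $w_n$ solve the homogeneous fractional ODE with zero initial data, giving $w_n\equiv0$ for every $n$, hence $w\equiv 0$. The main technical obstacle is not an estimate but the bookkeeping at the limit-passage step, namely verifying that $\partial_t^\alpha$ commutes with the spectral sum in the distributional sense on $(0,\infty)$; this is where one leans on \cite{SakamotoYamamoto:2011} and the $L^2$-sense interpretation of $\partial_t^\alpha u$ recorded in the proof of Lemma \ref{lem-ibp}.
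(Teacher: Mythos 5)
The paper offers no proof of this lemma --- it is quoted from \cite{SakamotoYamamoto:2011} --- and your eigenfunction expansion with the Mittag-Leffler/Duhamel kernel $g_n(s)=s^{\alpha-1}E_{\alpha,\alpha}(-\lambda_n s^\alpha)$, the $L^1$ bound $\|g_n\|_{L^1(0,\infty)}=\lambda_n^{-1}$ from Lemma \ref{mittagleffler_L1}, and Young's inequality is exactly the standard argument underlying that cited result, here correctly adapted to the weight $\lambda_n^{2(\gamma+1)}$ and to the infinite time horizon. The proof is correct and takes essentially the same approach as the source the paper relies on.
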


\begin{remark}\label{regularity boundary} 
Not hard to check $\mathcal{D}((-\Delta)^{\gamma_1})\subset 
\mathcal{D}((-\Delta)^{\gamma_2})$ if $0<\gamma_2<\gamma_1$, 
then we can set the $\gamma$ in Assumption \ref{condition} satisfy 
$\gamma\in(0,1/4)$. Lemma \ref{lem-ibvp} asserts that 
$u(\cdot,t)\in\mathcal{D}((-\Delta)^{\gamma+1})\subset H^{2\gamma+2}(\Omega)$ 
for a.e. $t\in(0,\infty)$. Then the continuity of the trace map  
$\psi\in H^{2\gamma+2}(\Omega)\mapsto \frac{\partial \psi}{\partial\nu}\in 
 H^{2\gamma+1/2}(\partial\Omega),$ stated in 
 \cite[Theorem 9.4]{LionsMagenes:1972V1}, gives  
 $\frac{\partial u}{\partial\nu}(\cdot,t)\in H^{2\gamma+1/2}(\partial\Omega),\ 
 a.e.\ t\in(0,\infty)$. With the facts that $\partial\Omega$ is 
 one-dimensional and $2\gamma+1/2\in(1/2,1)$, 
 \cite[Theorem 8.2]{DiNezzaPalatucciValdinoci:2012} yields that 
 $\frac{\partial u}{\partial\nu}(\cdot,t)\in C^{0,2\gamma}(\partial\Omega), 
 \ a.e.\ t\in(0,\infty)$. 
\end{remark}


\subsection{Harmonic functions and adjoint system to \eqref{eq-ibvp}}
Here, a set of harmonic functions and an adjoint system for \eqref{eq-ibvp} 
are introduced. 

For $l \in\mathbb Z$, we give the harmonic functions 
$$
\xi_l(x) = \xi_l(r,\theta) = 2^{-1/2}\pi^{-1/2} r^{|l|} e^{i l\theta}. 
$$
Setting $r=1$, that is, $|x|=1$, it is not difficult to check that the 
functions $\{\xi_l(1,\theta)\}_{l=-\infty}^\infty$ form an 
orthonormal basis of $L^2(\partial\Omega)$. For 
$z=(\cos\theta_z,\sin\theta_z)\in\partial\Omega$, 
we define $\delta_z^N\in C^\infty(\overline\Omega)$ as 
\begin{equation}
\label{defi-psi}
\delta_z^N(x) := \sum_{l=-N}^N \xi_l(z) \xi_{-l}(r,\theta),\quad (r,\theta)\in[0,1]\times[0,2\pi).
\end{equation}

\begin{lem}
\label{lem-appro-delta}
Let $u$ satisfy equation \eqref{eq-ibvp}, for $z\in\partial\Omega$, we have 
$$
\lim_{N\to\infty} \int_{\partial\Omega} \delta_z^N (x) 
\frac{\partial u}{\partial \nu} (x,t)\ dx = 
\frac{\partial u}{\partial \nu} (z,t),\quad a.e.\ t\in(0,\infty).
$$
\end{lem}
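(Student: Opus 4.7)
The plan is to restrict everything to the boundary and recognise the left-hand integral as the $N$-th partial Fourier sum of $\partial u/\partial\nu(\cdot,t)$ evaluated at the point $z$, then invoke the Hölder regularity provided by Remark \ref{regularity boundary} to pass to the limit. Concretely, I would first observe that on $\partial\Omega$ (where $r=1$) one has $\xi_l(1,\theta)=(2\pi)^{-1/2}e^{il\theta}$, so
\[
\int_{\partial\Omega}\delta_z^N(x)\,\frac{\partial u}{\partial\nu}(x,t)\,dx
=\sum_{l=-N}^{N}\xi_l(z)\int_{\partial\Omega}\xi_{-l}(x)\,\frac{\partial u}{\partial\nu}(x,t)\,dx.
\]
A short normalisation computation (using $\xi_{-l}=\overline{\xi_l}$ on $\partial\Omega$) identifies the inner integrals as multiples of the classical Fourier coefficients of $\theta\mapsto \partial u/\partial\nu(\cos\theta,\sin\theta,t)$, and the whole sum as exactly $S_N f(\theta_z)$, the symmetric partial Fourier sum at angle $\theta_z$ of $f:=\partial u/\partial\nu(\cdot,t)$.

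Next, I would extract the regularity needed for pointwise convergence of this Fourier series. Lemma \ref{lem-ibvp} gives $u(\cdot,t)\in\Dg$-regularity for a.e.\ $t$, and Remark \ref{regularity boundary}, through the trace theorem \cite[Theorem 9.4]{LionsMagenes:1972V1} and the fractional embedding \cite[Theorem 8.2]{DiNezzaPalatucciValdinoci:2012}, upgrades this to $f=\partial u/\partial\nu(\cdot,t)\in C^{0,2\gamma}(\partial\Omega)$ for a.e.\ $t\in(0,\infty)$, with $2\gamma\in(0,1/2)$. Hölder continuity of $f$ on the one-dimensional torus is exactly the hypothesis of the Dini--Lipschitz test, which guarantees that the symmetric Fourier partial sums $S_Nf$ converge uniformly (hence in particular pointwise) to $f$.

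Putting these two ingredients together, for a.e.\ $t\in(0,\infty)$ we have
\[
\lim_{N\to\infty}\int_{\partial\Omega}\delta_z^N(x)\,\frac{\partial u}{\partial\nu}(x,t)\,dx
=\lim_{N\to\infty}S_Nf(\theta_z)=f(\theta_z)=\frac{\partial u}{\partial\nu}(z,t),
\]
which is the claimed statement. The main obstacle is not the algebraic manipulation with $\xi_l$, which is essentially bookkeeping, but rather securing enough boundary regularity to force pointwise convergence of the Dirichlet-kernel truncation: the raw $L^2$ convergence of Fourier series is insufficient for a pointwise evaluation at $z$. This is exactly why the chain $\Dg\hookrightarrow H^{2\gamma+2}(\Omega)\to H^{2\gamma+1/2}(\partial\Omega)\hookrightarrow C^{0,2\gamma}(\partial\Omega)$ set up in Remark \ref{regularity boundary} is invoked, and why Assumption \ref{condition}(b) fixes some positive $\gamma$.
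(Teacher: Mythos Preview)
Your proposal is correct and follows essentially the same route as the paper: both identify $\int_{\partial\Omega}\delta_z^N(x)\,\partial_\nu u(x,t)\,dx$ with the $N$-th symmetric Fourier partial sum of $\partial_\nu u(\cdot,t)$ evaluated at $z$, and then invoke the $C^{0,2\gamma}(\partial\Omega)$ regularity from Remark~\ref{regularity boundary} to obtain pointwise convergence. The only difference is that you name the Dini--Lipschitz test explicitly, whereas the paper simply states that H\"older continuity implies pointwise convergence of the Fourier series.
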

\begin{proof}
From the definition \eqref{defi-psi} of the function $\delta_z^N$, it follows that
\begin{align*}
\sum_{l=-N}^N \int_{\partial\Omega} \xi_l(z) \xi_{-l}(x) 
\frac{\partial u}{\partial \nu} (x,t)\ dx
=\sum_{l=-N}^N \langle \frac{\partial u}{\partial \nu} (\cdot,t),
\xi_l(\cdot)  \rangle_{L^2(\partial\Omega)}\ \xi_l(z) .
\end{align*}
The result $\frac{\partial u}{\partial\nu}(\cdot,t)\in C^{0,2\gamma}(\partial\Omega)$ 
in Remark \ref{regularity boundary} implies that the Fourier series of 
$\frac{\partial u}{\partial \nu}(\cdot,t)$ converges pointwisely on $\partial\Omega$ 
for a.e. $t\in(0,\infty)$. Hence, 
$$\lim_{N\to\infty}\sum_{l=-N}^N \langle \frac{\partial u}{\partial \nu} (\cdot,t),
\xi_l(\cdot)  \rangle_{L^2(\partial\Omega)}\ \xi_l(z)=\frac{\partial u}{\partial \nu} (z,t),
\quad a.e.\ t\in(0,\infty),$$ 
which completes the proof.
\end{proof}

Since $\delta_z^N\in L^2(\Omega)$, then we can express $\delta_z^N$ 
in $L^2(\Omega)$ topology as 
\begin{equation*}
\label{eq-delN}
\delta_z^N(x) = \sum_{n=1}^\infty \langle \delta_z^N, 
\varphi_n\rangle_{L^2(\Omega)} \varphi_n(x).
\end{equation*}
By Lemma \ref{omega} and \eqref{bessel}, the Fourier coefficients $\langle \delta_z^N, \varphi_n \rangle_{L^2(\Omega)}$ 
are calculated as
\begin{equation}\label{a_n}
\langle \delta_z^N, \varphi_n\rangle_{L^2(\Omega)}
=
\begin{cases}
\pi^{-1/2}\lambda_n^{-1/2} e^{-im(n)\theta_z}, & \mbox{if $N\ge|m(n)|$,}
\\
0, & \mbox{otherwise.}
\end{cases}
\end{equation}

We assume $u_z^N$ is the solution of the following initial-boundary value problem
\begin{equation}
\label{eq-uM}
\left\{
\begin{alignedat}{2}
D_t^\alpha u_z^N(x,t) &= \Delta u_z^N(x,t), &\quad& (x,t)\in \Omega\times(0,\infty),\\
u_z^N(x,t) &= 0,&\quad& (x,t)\in\partial\Omega\times (0,\infty),\\
I^{1-\alpha}u_z^N(x,t) &=-\delta_z^N(x), &\quad& (x,t)\in\Omega\times\{0\}.
\end{alignedat}
\right.
\end{equation}
In view of the fact that $\delta_z^N$ is the linear combination of 
harmonic functions on $\Omega$, we see that $\Delta \delta_z^N = 0$, then 
$w_z^N(x,t):=u_z^N(x,t) + \frac{t^{\alpha-1}}{\Gamma(\alpha)} \delta_z^N(x)$ 
satisfies the following initial-boundary value problem
\begin{equation}
\label{eq-adjoint}
\left\{
\begin{alignedat}{2}
D_t^\alpha w_z^N(x,t) &= \Delta w_z^N(x,t), &\quad& (x,t)\in \Omega\times(0,\infty),\\
w_z^N(x,t) &= \frac{t^{\alpha-1}}{\Gamma(\alpha)}\delta_z^N(x),&\quad& (x,t)\in\partial\Omega\times (0,\infty),\\
I^{1-\alpha} w_z^N(x,t) &= 0, &\quad& (x,t)\in\Omega\times\{0\}. 
\end{alignedat}
\right.
\end{equation}

By Lemma \ref{lem-ode}, \eqref{a_n} and \eqref{eq-uM}, the representation 
of $w_z^N$ is given as 
\begin{equation}\label{w_z^N}
w_z^N(x,t) = \sum_{|m(n)|\le N} \pi^{-1/2}\lambda_n^{-1/2} e^{-im(n)\theta_z} t^{\alpha-1}\big[\frac1{\Gamma(\alpha)}
-E_{\alpha,\alpha}(-\lambda_nt^\alpha) \big]\varphi_n(x),\ t>0.
\end{equation}

\section{Uniqueness theorem}
Now we will establish the proof of Theorem \ref{thm-unique}. 
Throughout this section, Assumption \ref{condition} is supposed to be valid. 
For $k=1,\cdots,K,\ n\in \mathbb{N}^+,\ z=(\cos\theta_z,\sin\theta_z)
\in \partial\Omega$, we denote 
\begin{equation*}
 p_{k,n}:=\langle p_k(\cdot),\varphi_n(\cdot)\rangle_{L^2(\Omega)}, 
 \ a_n(z):=\pi^{-1/2}\lambda_n^{-1/2}e^{im(n)\theta_z}.
\end{equation*}
Then the following absolute convergence result can be proved, which will 
be used in the uniqueness proof.
\begin{lem}
\label{convergence}
 $\sum_{k=1}^K\sum_{n=1}^\infty a_n(z)p_{k,n}$ is absolutely convergent for 
 each $z\in\partial\Omega$.
\end{lem}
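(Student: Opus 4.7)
The plan is to bound the double sum by the product of two convergent series, one depending only on the geometry (through Weyl's law) and the other controlled by Assumption \ref{condition}. First I would record that, by definition, $|a_n(z)|=\pi^{-1/2}\lambda_n^{-1/2}$, so for each fixed $k$,
\[
\sum_{n=1}^\infty |a_n(z)\,p_{k,n}|
=\pi^{-1/2}\sum_{n=1}^\infty \lambda_n^{-1/2-\gamma}\cdot \lambda_n^{\gamma}|p_{k,n}|.
\]
An application of the Cauchy--Schwarz inequality then yields
\[
\sum_{n=1}^\infty |a_n(z)\,p_{k,n}|
\le \pi^{-1/2}\Bigl(\sum_{n=1}^\infty \lambda_n^{-1-2\gamma}\Bigr)^{1/2}
\Bigl(\sum_{n=1}^\infty \lambda_n^{2\gamma}|p_{k,n}|^2\Bigr)^{1/2}
= C_\gamma\,\|p_k\|_{\Dg},
\]
where the first factor is finite because, by Weyl's law on the unit disc in $\mathbb{R}^2$, $\lambda_n\sim c\,n$ so that $\sum_n \lambda_n^{-1-2\gamma}$ converges for any $\gamma>0$. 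This reduces the problem to showing that $\sum_{k=1}^K \|p_k\|_{\Dg}<\infty$.

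For this summability in $k$, I would use Assumption \ref{condition}(a)--(b) together. Since $\sum_{k=1}^K p_k(x)\chi_{{}_{t\in[c_{k-1},c_k)}}\in L^1(0,\infty;\Dg)$, a direct computation of the $L^1(0,\infty;\Dg)$-norm gives
\[
\Bigl\|\sum_{k=1}^K p_k\chi_{{}_{[c_{k-1},c_k)}}\Bigr\|_{L^1(0,\infty;\Dg)}
=\sum_{k=1}^K (c_k-c_{k-1})\,\|p_k\|_{\Dg}<\infty,
\]
and the uniform lower bound $c_k-c_{k-1}\ge \eta>0$ from Assumption \ref{condition}(a) then produces
\[
\sum_{k=1}^K \|p_k\|_{\Dg}\le \eta^{-1}\Bigl\|\sum_{k=1}^K p_k\chi_{{}_{[c_{k-1},c_k)}}\Bigr\|_{L^1(0,\infty;\Dg)}<\infty.
\]
Combining this with the Cauchy--Schwarz bound above yields
\[
\sum_{k=1}^K\sum_{n=1}^\infty |a_n(z)\,p_{k,n}|
\le C_\gamma\sum_{k=1}^K \|p_k\|_{\Dg}<\infty,
\]
which is the desired absolute convergence, uniform in $z\in\partial\Omega$.

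The only nonroutine step is identifying the right place to apply Cauchy--Schwarz so that the $\Dg$-norm of each $p_k$ appears; once this is done, Assumption \ref{condition} has clearly been tailored to supply both (i) the per-$k$ decay through $p_k\in\Dg$ and (ii) the summability in $k$ via the minimum step size $\eta$ paired with the $L^1$-in-time hypothesis. I do not anticipate any real obstacle; the argument is essentially a bookkeeping exercise combining Weyl asymptotics with the two structural assumptions.
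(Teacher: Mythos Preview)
Your argument is correct and essentially identical to the paper's: both split the inner sum via Cauchy--Schwarz to produce the factor $\bigl(\sum_n \lambda_n^{-1-2\gamma}\bigr)^{1/2}$, bound it by Weyl's law, and then control $\sum_k \|p_k\|_{\Dg}$ using the $L^1(0,\infty;\Dg)$ hypothesis together with the uniform step bound $c_k-c_{k-1}\ge\eta$. The only cosmetic difference is that you make the identity $|a_n(z)|=\pi^{-1/2}\lambda_n^{-1/2}$ explicit before applying Cauchy--Schwarz, whereas the paper absorbs it into the estimate $|a_n(z)|^2\lambda_n^{-2\gamma}\le C n^{-1-2\gamma}$.
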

\begin{proof}
\begin{align*}
 \sum_{k=1}^K\sum_{n=1}^\infty |a_n(z)p_{k,n}|
\le &  \sum_{k=1}^K\Big[\sum_{n=1}^\infty a_n^2(z)\lambda_n^{-2\gamma}\Big]^{1/2}
\Big[\sum_{n=1}^\infty \lambda_n^{2\gamma} p_{k,n}^2\Big]^{1/2}\\
=&\Big[\sum_{n=1}^\infty a_n^2(z)\lambda_n^{-2\gamma}\Big]^{1/2}\sum_{k=1}^K
\ \Big[\sum_{n=1}^\infty \lambda_n^{2\gamma} p_{k,n}^2\Big]^{1/2}.
\end{align*}
Weyl's Law gives that $\lambda_n=O(n)$, then 
$|a_n^2(z)\lambda_n^{-2\gamma}|\le Cn^{-1-2\gamma}$, which implies 
$\sum_{n=1}^\infty a_n^2(z)\lambda_n^{-2\gamma}
\le C \sum_{n=1}^\infty n^{-1-2\gamma}<\infty$. 
 Also, from Assumption \ref{condition} $(b)$, we have 
 \begin{equation*}
  \Big\|\sum_{k=1}^K p_k(x)\chi_{{}_{t\in[c_{k-1},c_k)}}
  \Big\|_{L^1(0,\infty;\Dg)}
  =\sum_{k=1}^K (c_k-c_{k-1}) 
  \Big[\sum_{n=1}^\infty \lambda_n^{2\gamma} p_{k,n}^2\Big]^{1/2}<\infty,
 \end{equation*}
which with Assumption \ref{condition} $(a)$ gives 
$$
\eta\sum_{k=1}^K \Big[\sum_{n=1}^\infty \lambda_n^{2\gamma} p_{k,n}^2\Big]^{1/2}
\le \sum_{k=1}^K (c_k-c_{k-1}) 
  \Big[\sum_{n=1}^\infty \lambda_n^{2\gamma} p_{k,n}^2\Big]^{1/2}<\infty,
$$
i.e. 
$\sum_{k=1}^K \Big[\sum_{n=1}^\infty \lambda_n^{2\gamma} 
p_{k,n}^2\Big]^{1/2}<\infty$. 
Hence, it holds that 
$ \sum_{k=1}^K\sum_{n=1}^\infty |a_n(z)p_{k,n}| <\infty $ and the proof 
is complete. 
\end{proof}

\subsection{Measurement representation} 

In this subsection we will build a connection between the flux measurements 
and the unknowns. 
\begin{lem}\label{lem_measurement_1}
Assume $z\in\partial\Omega$, and let $u$ and $w_z^N$ be the solutions 
of \eqref{eq-ibvp} and \eqref{eq-adjoint} respectively, then
$$
-\Big(I^\alpha \frac{\partial u}{\partial \nu}\Big)(z,t)=
 \int_0^t  \lim_{N\to \infty}\Big[\int_\Omega \sum_{k=1}^K 
p_k(x)\chi_{{}_{\tau\in[c_{k-1},c_k)}} w_z^N(x,t-\tau)\ dx\Big]\ d\tau,
\quad t>0.
$$
\end{lem}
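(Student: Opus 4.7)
The plan is a duality argument: test the PDE $\partial_t^\alpha u-\Delta u=\sum_k p_k\chi_{[c_{k-1},c_k)}$ against $w_z^N(x,t-\tau)$, integrate over $\Omega\times(0,t)$, and integrate by parts in both time and space. The adjoint system \eqref{eq-adjoint} is precisely engineered so that $D_s^\alpha w_z^N=\Delta w_z^N$ makes the interior $u$-contributions cancel, while the boundary trace $w_z^N|_{\partial\Omega}=\tfrac{(t-\tau)^{\alpha-1}}{\Gamma(\alpha)}\delta_z^N$ isolates a weighted boundary integral of $\frac{\partial u}{\partial\nu}$ against $\delta_z^N$; sending $N\to\infty$ and invoking Lemma \ref{lem-appro-delta} converts this into $\frac{\partial u}{\partial\nu}(z,\cdot)$ convolved with the Abel kernel.

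For the time integration-by-parts, I use the eigen-expansion \eqref{w_z^N}, writing $w_z^N(x,s)=\sum_{|m(n)|\le N}\pi^{-1/2}\lambda_n^{-1/2}e^{-im(n)\theta_z}\,\psi_n(s)\,\varphi_n(x)$ with $\psi_n(s):=s^{\alpha-1}\bigl[\tfrac{1}{\Gamma(\alpha)}-E_{\alpha,\alpha}(-\lambda_n s^\alpha)\bigr]$. Each $\psi_n$ satisfies the hypotheses of Lemma \ref{lem-ibp}: it is continuous on $(0,\infty)$ (and even continuous at $0$ since $\psi_n(s)\sim\lambda_n s^{2\alpha-1}/\Gamma(2\alpha)$ and $2\alpha-1>0$ by \eqref{condition_alpha}); by Lemma \ref{lem-ode} and $D_t^\alpha[s^{\alpha-1}/\Gamma(\alpha)]=0$ one has $D_t^\alpha\psi_n(s)=\lambda_n s^{\alpha-1}E_{\alpha,\alpha}(-\lambda_n s^\alpha)\in C(0,\infty)$; and the identity $I^{1-\alpha}[s^{\alpha-1}/\Gamma(\alpha)]\equiv 1$ paired with the initial-value formula in Lemma \ref{lem-ode} gives $I^{1-\alpha}\psi_n(0^+)=1-1=0$. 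Applying Lemma \ref{lem-ibp} to each summand with $v=\varphi_n$ and resumming yields
\begin{equation*}
\int_0^t\!\!\int_\Omega \partial_t^\alpha u(x,\tau)\,w_z^N(x,t-\tau)\,dx\,d\tau = \int_0^t\!\!\int_\Omega u(x,\tau)\,D_s^\alpha w_z^N(x,t-\tau)\,dx\,d\tau,
\end{equation*}
where the swap of series and integrals is controlled by $u\in L^2(0,\infty;\mathcal D((-\Delta)^{\gamma+1}))$ from Lemma \ref{lem-ibvp} together with the decay bound of Lemma \ref{lem-ml-asymp} on $\psi_n$ and $D_t^\alpha\psi_n$.

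Next, Green's second identity in space combined with $u|_{\partial\Omega}=0$ gives $-\int_\Omega\Delta u\cdot w_z^N\,dx=-\int_\Omega u\,\Delta w_z^N\,dx-\int_{\partial\Omega}\tfrac{\partial u}{\partial\nu}w_z^N\,dS$. Summing this with the time-IBP identity, the interior $u$-terms cancel by $D_s^\alpha w_z^N-\Delta w_z^N=0$, and the boundary trace formula produces
\begin{equation*}
-\int_0^t\frac{(t-\tau)^{\alpha-1}}{\Gamma(\alpha)}\!\int_{\partial\Omega}\!\delta_z^N(x)\,\frac{\partial u}{\partial\nu}(x,\tau)\,dS\,d\tau = \int_0^t\!\!\int_\Omega \sum_{k=1}^K p_k(x)\chi_{[c_{k-1},c_k)}(\tau)\,w_z^N(x,t-\tau)\,dx\,d\tau.
\end{equation*}

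Finally, let $N\to\infty$. Lemma \ref{lem-appro-delta} delivers pointwise-in-$\tau$ convergence of the inner boundary integral to $\frac{\partial u}{\partial\nu}(z,\tau)$, and the uniform $C^{0,2\gamma}(\partial\Omega)$ control of $\frac{\partial u}{\partial\nu}(\cdot,\tau)$ from Remark \ref{regularity boundary}, together with the integrable weight $(t-\tau)^{\alpha-1}$, lets dominated convergence bring the limit inside the $\tau$-integral and identify the left side with $-(I^\alpha\tfrac{\partial u}{\partial\nu})(z,t)$. Since both sides of the displayed identity agree for every $N$, the same limit placed inside the outer $\tau$-integral on the right yields the claim. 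The main technical obstacle is precisely these interchanges of series, spatial integrals, and the limit $N\to\infty$ in the presence of the weak singularity $s^{\alpha-1}$ in $w_z^N$ at $s=0$; the restriction $1/2<\alpha<1$ in \eqref{condition_alpha} (which makes the factor $s^{2\alpha-1}$ in $\psi_n$ vanish at $s=0$) and the Hölder regularity of the boundary flux furnished by Lemma \ref{lem-ibvp} and Remark \ref{regularity boundary} are what supply the uniform-in-$N$ dominations the argument requires.
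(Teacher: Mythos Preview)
Your proposal is correct and follows essentially the same duality argument as the paper: test the forward equation against $w_z^N(\cdot,t-\tau)$, use Lemma~\ref{lem-ibp} for the time integration by parts, use Green's identity in space, cancel the interior terms via the adjoint equation, and pass to the limit $N\to\infty$ by Lemma~\ref{lem-appro-delta}. The only cosmetic differences are that the paper uses Green's \emph{first} identity (working with $\nabla u\cdot\nabla w_z^N$) and invokes Lemma~\ref{lem-ibp} directly on $w_z^N$ through its initial condition $I^{1-\alpha}w_z^N(\cdot,0)=0$, rather than expanding eigenmode-wise and checking the hypotheses on each $\psi_n$ as you do; neither the restriction $\alpha>1/2$ nor continuity of $\psi_n$ at $0$ is actually needed here, since Lemma~\ref{lem-ibp} only asks for $\psi\in C(0,\infty)$.
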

\begin{proof}
Equation \eqref{eq-adjoint} and Green's identities yield that 
for $v\in H_0^1(\Omega)$, 
$$
\int_\Omega D_t ^\alpha w_z^N(x,t) v(x) + \nabla w_z^N(x,t)\cdot\nabla v(x)\ dx = 0,\quad t>0.
$$
On the other hand, taking convolution of $w_z^N(x,t)$ and equation \eqref{eq-ibvp}, we see that
\begin{align*}
I_N :=& \int_0^t\int_\Omega \sum_{k=1}^K 
p_k(x)\chi_{{}_{\tau\in[c_{k-1},c_k)}} w_z^N(x,t-\tau)\ dx\ d\tau\\
=& \int_0^t\int_\Omega \big[ \partial_t^\alpha u(x,\tau)
- \Delta u(x,\tau) \big] w_z^N(x,t-\tau)\ dx\ d\tau.
\end{align*}
With Green's identities, we have 
\begin{align*}
I_N  = &\int_0^t\int_\Omega \partial_t^\alpha u(x,\tau) w_z^N(x,t-\tau)\ dx\ d\tau
+ \int_0^t\int_\Omega \nabla u(x,\tau) \cdot \nabla w_z^N(x,t-\tau)\ dx\ d\tau\\
&- \int_0^t\int_{\partial\Omega} \frac{\partial u}{\partial \nu}(x,\tau) w_z^N(x,t-\tau)\ dx\ d\tau.
\end{align*}
With \eqref{w_z^N} and $I^{1-\alpha}w_z^N(x,0)=0$, then from Lemma \ref{lem-ibp}, we arrive at the equality
$$
 \int_0^t \int_\Omega \partial_t^\alpha u(x,\tau) w_z^N(x,t-\tau)
 \ dx\ d\tau
=  \int_0^t \int_\Omega u(x,\tau) D_t^\alpha w_z^N(x,t-\tau)\ dx
\ d\tau.
$$
Finally, we get
\begin{align*}
I_N  = &
\int_0^t\int_\Omega \Big[D_t^\alpha w_z^N(x,t-\tau) u(x,\tau) 
+ \nabla w_z^N(x,t-\tau)\cdot \nabla u(x,\tau)\Big]\ dx\ d\tau 
\\
&-\frac1{\Gamma(\alpha)} \int_0^t \int_{\partial\Omega} 
(t-\tau)^{\alpha-1}\frac{\partial u}{\partial \nu}(x,\tau) \delta_z^N(x)\ dx\ d\tau\\
=&- \frac1{\Gamma(\alpha)} \int_0^t\int_{\partial\Omega}  
(t-\tau)^{\alpha-1}\frac{\partial u}{\partial \nu}(x,\tau) \delta_z^N(x)\ dx\ d\tau.
\end{align*}
Now from Lemma \ref{lem-appro-delta} and realizing the `almost everywhere' 
can be neglected in integration, it follows that  
$$
 \int_0^t  \lim_{N\to \infty}\Big[\int_\Omega \sum_{k=1}^K 
p_k(x)\chi_{{}_{\tau\in[c_{k-1},c_k)}} w_z^N(x,t-\tau)\ dx\Big]\ d\tau
=-\frac1{\Gamma(\alpha)}\int_0^t (t-\tau)^{\alpha-1}
\frac{\partial u}{\partial \nu}(z,\tau)\ d\tau,
$$
which completes the proof.
\end{proof}

With the above lemma, we can show the result below straightforwardly.
\begin{lem}
\label{lem-ipfp}
For $z\in\partial\Omega$, we have for $t>0$, 
\begin{align*}
-\Big(I^\alpha &\frac{\partial u}{\partial\nu}\Big)(z,t)\\
&=\int_0^t \sum_{k=1}^{K} \chi_{{}_{\tau\in[c_{k-1},c_k)}}(t-\tau)^{\alpha-1}
\sum_{n=1}^\infty p_{k,n}a_n(z) 
\big[\frac1{\Gamma(\alpha)}-E_{\alpha,\alpha}(-\lambda_n(t-\tau)^\alpha)\big]\ d\tau.
\end{align*}
\end{lem}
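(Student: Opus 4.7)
The plan is to substitute the explicit spectral expansion \eqref{w_z^N} for $w_z^N$ into the representation furnished by Lemma \ref{lem_measurement_1}, and then exchange the limit $N\to\infty$ with the outer time integral.

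First, I would freeze $N$ and $\tau$ inside the outer integral. Because $w_z^N(\cdot,t-\tau)$ is a finite linear combination of the eigenfunctions $\varphi_n$, one has
\begin{equation*}
\int_\Omega p_k(x)\,w_z^N(x,t-\tau)\,dx = (t-\tau)^{\alpha-1}\!\sum_{|m(n)|\le N}\!\pi^{-1/2}\lambda_n^{-1/2}e^{-im(n)\theta_z}\Big[\tfrac{1}{\Gamma(\alpha)}-E_{\alpha,\alpha}(-\lambda_n(t-\tau)^\alpha)\Big]\!\int_\Omega p_k\varphi_n\,dx.
\end{equation*}
Using the pairing of Dirichlet eigenfunctions on the disc (each eigenpair $(\lambda_n,\varphi_n)$ with $m(n)\ne 0$ comes with a partner $(\lambda_n,\varphi_{n^*}=\overline{\varphi_n})$ satisfying $m(n^*)=-m(n)$, as noted after \eqref{eigenfunction}), the substitution $n\leftrightarrow n^*$ converts $e^{-im(n)\theta_z}\int p_k\varphi_n\,dx$ into $e^{im(n)\theta_z}p_{k,n}$, producing precisely the combination $p_{k,n}\,a_n(z)$.

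Second, I would pass $N\to\infty$ under the outer $\tau$-integral by dominated convergence. Lemma \ref{lem-ml-asymp} gives $|E_{\alpha,\alpha}(-\lambda_n s^\alpha)|\le C$ uniformly in $n$ and $s\ge 0$, so each bracket is bounded by $1/\Gamma(\alpha)+C$. Combined with the absolute-convergence estimate $\sum_{k,n}|p_{k,n}a_n(z)|<\infty$ provided by Lemma \ref{convergence}, and with the $L^1$-integrability of $(t-\tau)^{\alpha-1}$ on $(0,t)$ (since $\alpha\in(0,1)$), the partial sums admit the common integrable majorant $C(t-\tau)^{\alpha-1}\sum_{k,n}|p_{k,n}a_n(z)|$. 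Note also that for each $\tau$ the sum over $k$ contributes at most one non-zero term, since the intervals $[c_{k-1},c_k)$ are pairwise disjoint. Dominated convergence then delivers the claimed identity.

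The only points requiring care are the index-symmetry rewriting turning $\int p_k\varphi_n\,dx$ into $p_{k,n}$ (immediate from the $\pm m$ pairing on the disc) and the interchange of limit and integral (controlled cleanly by Lemma \ref{convergence} and Lemma \ref{lem-ml-asymp}). Neither constitutes a real obstacle, so the lemma follows as a direct chain of substitutions from Lemma \ref{lem_measurement_1}, the representation \eqref{w_z^N}, and the absolute-convergence bound.
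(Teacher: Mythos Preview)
Your proposal is correct and follows the same route as the paper: substitute \eqref{w_z^N} into Lemma \ref{lem_measurement_1}, rewrite the coefficients as $p_{k,n}a_n(z)$ (the paper does this implicitly by writing $\langle p_k,\overline{w_z^N}\rangle$, you via the explicit $\pm m$ conjugate-pairing re-indexing), and pass to the limit using Lemma \ref{convergence} together with the uniform bound from Lemma \ref{lem-ml-asymp}. One small remark: since Lemma \ref{lem_measurement_1} already places the limit $N\to\infty$ \emph{inside} the $\tau$-integral, you only need pointwise convergence of the partial sums (immediate from absolute convergence), not a full dominated-convergence interchange; the paper handles this last step with a direct $\epsilon$--$N$ argument tailored to the truncation $\{n:|m(n)|\le N\}$, but your majorant covers it equally well.
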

\begin{proof}
For a fixed $t\in(0,\infty)$, there are only finite $k$ satisfying 
$c_k\le t$. Sequentially, the summation in the result of Lemma \ref{lem_measurement_1} 
is finite and we denote it by $\sum_{k=1}^{K_t}\cdots$ with $K_t<\infty$. 
Then we have 
\begin{equation*}
\begin{aligned}
\int_\Omega \sum_{k=1}^K 
p_k(x)\chi_{{}_{\tau\in[c_{k-1},c_k)}} w_z^N(x,t-\tau)\ dx
&= \int_\Omega  \sum_{k=1}^{K_t}
p_k(x)\chi_{{}_{\tau\in[c_{k-1},c_k)}} w_z^N(x,t-\tau)\ dx\\
&=\sum_{k=1}^{K_t} \chi_{{}_{\tau\in[c_{k-1},c_k)}}\l 
p_k(\cdot), \overline{w_z^N(\cdot,t-\tau)} \ro.
\end{aligned}
\end{equation*}
Since $u_z^N, \delta_z^N\in L^2(\Omega)$ for a.e. $t>0$, so does $w_z^N$. Also 
Assumption \ref{condition} ensures $p_k\in L^2(\Omega),\ k=1,\cdots,K$. 
So with \eqref{w_z^N}, we have for a.e. $\tau\in(0,t)$,  
\begin{align*}
\langle p_k(\cdot),\overline{w_z^N(\cdot,t-\tau)} \rangle_{L^2(\Omega)}
= (t-\tau)^{\alpha-1}\sum_{|m(n)|\le N} p_{k,n}a_n(z) 
\big[\frac1{\Gamma(\alpha)}-E_{\alpha,\alpha}(-\lambda_n(t-\tau)^\alpha)\big],
\end{align*}
which leads to 
\begin{align*}
&\lim_{N\to \infty}\int_\Omega \sum_{k=1}^K 
p_k(x)\chi_{{}_{\tau\in[c_{k-1},c_k)}} w_z^N(x,t-\tau)\ dx\\
&=\lim_{N\to \infty}\sum_{k=1}^{K_t} \chi_{{}_{\tau\in[c_{k-1},c_k)}}\l p_k(\cdot), \overline{w_z^N(\cdot,t-\tau)}\ro\\
&=\sum_{k=1}^{K_t} \chi_{{}_{\tau\in[c_{k-1},c_k)}}(t-\tau)^{\alpha-1}
\lim_{N\to \infty}\sum_{|m(n)|\le N} p_{k,n}a_n(z) 
\big[\frac1{\Gamma(\alpha)}-E_{\alpha,\alpha}(-\lambda_n(t-\tau)^\alpha)\big].
\end{align*}
Now given $\epsilon>0$, Lemma \ref{convergence} 
yields that there exists $N_0>0$ such that $\sum_{n=N_0+1}^\infty 
|a_n(z)p_{k,n}|<\epsilon$, $k=1,\cdots,K_t$. Let 
$N_1=\max\{|m(n)|:n=1,\cdots,N_0\}$, then for $N\ge N_1$, we have 
\begin{equation*}
 \begin{aligned}
  &\Big|\big(\sum_{n=1}^\infty-\sum_{|m(n)|\le N}\big)\ p_{k,n}a_n(z) 
  \big[\frac1{\Gamma(\alpha)}-E_{\alpha,\alpha}(-\lambda_n(t-\tau)^\alpha)\big]\Big|\\
  &= \Big|\big(\sum_{n=N_0+1}^\infty-\sum_{n>N_0,\ |m(n)|\le N}\big) 
  \ p_{k,n}a_n(z) \big[\frac1{\Gamma(\alpha)}
  -E_{\alpha,\alpha}(-\lambda_n(t-\tau)^\alpha)\big]\Big| \\
  &\le C\sum_{n=N_0+1}^\infty|a_n(z)p_{k,n}| <C\epsilon.
 \end{aligned}
\end{equation*}
Estimate in Lemma \ref{lem-ml-asymp} is used above. Now we have proved 
for a.e. $\tau\in(0,t)$,
\begin{equation*}
\begin{aligned}
&\lim_{N\to \infty}\int_\Omega \sum_{k=1}^K 
p_k(x)\chi_{{}_{\tau\in[c_{k-1},c_k)}} w_z^N(x,t-\tau)\ dx\\
&=\sum_{k=1}^{K_t} \chi_{{}_{\tau\in[c_{k-1},c_k)}}(t-\tau)^{\alpha-1}
\sum_{n=1}^\infty p_{k,n}a_n(z) 
\big[\frac1{\Gamma(\alpha)}-E_{\alpha,\alpha}(-\lambda_n(t-\tau)^\alpha)\big]\\
&=\sum_{k=1}^{K} \chi_{{}_{\tau\in[c_{k-1},c_k)}}(t-\tau)^{\alpha-1}
\sum_{n=1}^\infty p_{k,n}a_n(z) 
\big[\frac1{\Gamma(\alpha)}-E_{\alpha,\alpha}(-\lambda_n(t-\tau)^\alpha)\big],
\end{aligned}
\end{equation*}
which together with Lemma \ref{lem_measurement_1} completes the proof.
\end{proof}

\subsection{Laplace transform argument}

The convolution structure in the result of Lemma \ref{lem-ipfp} 
encourages us to apply Laplace transform. 
From Lemmas \ref{lem-ml-asymp} and \ref{convergence}, it holds that 
\begin{equation*}
\begin{aligned}
&\Big|\int_0^t \sum_{k=1}^{K} \chi_{{}_{\tau\in[c_{k-1},c_k)}}(t-\tau)^{\alpha-1}
\sum_{n=1}^\infty p_{k,n}a_n(z) 
\big[\frac1{\Gamma(\alpha)}-E_{\alpha,\alpha}(-\lambda_n(t-\tau)^\alpha)\big]\ d\tau\Big|\\
&\le \int_0^t \sum_{k=1}^{K} \chi_{{}_{\tau\in[c_{k-1},c_k)}}(t-\tau)^{\alpha-1}
\sum_{n=1}^\infty |p_{k,n}a_n(z)| 
\ \big|\frac1{\Gamma(\alpha)}-E_{\alpha,\alpha}(-\lambda_n(t-\tau)^\alpha)\big|\ d\tau\\
&\le  C\int_0^t (t-\tau)^{\alpha-1}\ d\tau\le Ct^\alpha.
 \end{aligned}
\end{equation*}
Also we can see $|e^{-st}t^\alpha|$ is integrable on $(0,\infty)$ for 
$s\in\Lambda^+$. Then by Dominated Convergence Theorem and Lemma 
\ref{lem-lap-ml}, taking Laplace transform on the result in Lemma 
\ref{lem-ipfp} yields that  
\begin{equation*}
\begin{aligned}
&\mathcal L \left\{-\Big(I^\alpha \frac{\partial u}{\partial\nu}\Big)(z,t);s \right\}\\
&= \int_0^\infty e^{-st} \int_0^t \sum_{k=1}^{K} \chi_{{}_{\tau\in[c_{k-1},c_k)}}(t-\tau)^{\alpha-1}
\sum_{n=1}^\infty p_{k,n}a_n(z) 
\big[\frac1{\Gamma(\alpha)}-E_{\alpha,\alpha}(-\lambda_n(t-\tau)^\alpha)\big]\ d\tau\ dt\\
&=\sum_{k=1}^{K}\sum_{n=1}^\infty\int_0^\infty e^{-st} \int_0^t  \chi_{{}_{\tau\in[c_{k-1},c_k)}}(t-\tau)^{\alpha-1}
 p_{k,n}a_n(z) \big[\frac1{\Gamma(\alpha)}-E_{\alpha,\alpha}
 (-\lambda_n(t-\tau)^\alpha)\big]\ d\tau\ dt\\
&=s^{-1-\alpha}\sum_{k=1}^{K}(e^{-c_{k-1}s}-e^{-c_ks})
\big[\sum_{n=1}^\infty p_{k,n}a_n(z)\lambda_n(s^\alpha+\lambda_n)^{-1}\big], \ s\in\Lambda^+,
\end{aligned}
\end{equation*}
which with $\mathcal L (I^\alpha \psi)=s^{-\alpha}\mathcal L(\psi)$ implies
\begin{equation}\label{laplace}
\mathcal L \left\{ -\frac{\partial u}{\partial\nu}(z,t);s \right\}
=s^{-1}\sum_{k=1}^{K}(e^{-c_{k-1}s}-e^{-c_ks})
\big[\sum_{n=1}^\infty a_n(z)p_{k,n}\lambda_n(s^\alpha+\lambda_n)^{-1}\big], \ s\in\Lambda^+.
\end{equation}
We denote the set of distinct eigenvalues by 
$\{\lambda_j\}_{j=1}^\infty$ with increasing order.
From the definition of complex branch $\Lambda$ and condition 
\eqref{condition_alpha}, every pole $(-\lambda_j)^{1/\alpha},\ j\in\mathbb{N}^+$ 
in \eqref{laplace} is included by $\Lambda$. This is crucial in the proof of Lemma 
\ref{lem-exp_eigen1}, which can be seen later.

Next, after deducing \eqref{laplace}, we need to show the well-definedness and analyticity of the complex series in it. 

\begin{lem}\label{analytic}
 Under Assumption \ref{condition}, the following properties hold,  
 \begin{itemize}   
 \item [(a)]  Define $\Lambda_R:=\{s\in \Lambda:|s|< R, R>0\}$, then for $k=1,\cdots,K$, the series 
 $\sum_{n=1}^\infty a_n(z)p_{k,n} \lambda_n(s^\alpha + \lambda_n)^{-1}$ is uniformly convergent for $s\in \Lambda_R\setminus 
 \{(-\lambda_j)^{1/\alpha}\}_{j=1}^\infty$.   
  \item [(b)] $\sum_{n=1}^\infty a_n(z)p_{k,n}\lambda_n 
  (s^\alpha+\lambda_n)^{-1}$ is analytic on 
  $\Lambda\setminus \{(-\lambda_j)^{1/\alpha}\}_{j=1}^\infty$ 
  for $k=1,\cdots,K$. 
 \item [(c)] $\sum_{k=1}^{K}(e^{-c_{k-1}s}-e^{-c_ks})\big[\sum_{n=1}^\infty a_n(z)p_{k,n}\lambda_n(s^\alpha+\lambda_n)^{-1}\big]$ is analytic on \\  
 $\Lambda^+\setminus \{(-\lambda_j)^{1/\alpha}\}_{j=1}^\infty$.
 \end{itemize}
\end{lem}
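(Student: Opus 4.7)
All three assertions follow a single pattern: reduce to the Weierstrass $M$-test after establishing a uniform-in-$n$ bound on the resolvent factor $\lambda_n/(s^\alpha+\lambda_n)$ on compact subsets that avoid the pole set $\{(-\lambda_j)^{1/\alpha}\}_{j=1}^\infty$.

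The key preliminary step is the following estimate: for every compact $K_0 \subset \Lambda_R \setminus \{(-\lambda_j)^{1/\alpha}\}_{j=1}^\infty$, there exists $C_{K_0}>0$ with $\bigl|\lambda_n/(s^\alpha+\lambda_n)\bigr| \le C_{K_0}$ for every $n \ge 1$ and every $s \in K_0$. On $K_0$ we have $|s^\alpha| \le R^\alpha$, so for indices with $\lambda_n \ge 2R^\alpha$ the triangle inequality yields $|s^\alpha+\lambda_n| \ge \lambda_n/2$, giving a factor bounded by $2$. The remaining indices are finitely many (Weyl's law), and on the compact set $K_0$, which is separated from this finite collection of poles by some $\delta>0$, the factor is again uniformly bounded. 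Combining this with Lemma \ref{convergence}, which supplies $\sum_n|a_n(z)p_{k,n}|<\infty$, produces the dominating series $M_n = C_{K_0}|a_n(z)p_{k,n}|$ with finite sum, and the $M$-test delivers part (a) (read as uniform convergence on compact subsets of $\Lambda_R \setminus \{(-\lambda_j)^{1/\alpha}\}$).

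For (b), the complex branch $\Lambda$ was chosen precisely so that $s \mapsto s^\alpha$ is single-valued and analytic on $\Lambda$, hence each summand $a_n(z)p_{k,n}\lambda_n/(s^\alpha+\lambda_n)$ is meromorphic on $\Lambda$ with only pole $(-\lambda_n)^{1/\alpha}$. Letting $R\to\infty$ in the previous step and invoking the Weierstrass theorem on uniform limits of analytic functions transfers analyticity to the limit on $\Lambda \setminus \{(-\lambda_j)^{1/\alpha}\}$. For (c), on $\Lambda^+$ we have $|e^{-c_{k-1}s}-e^{-c_ks}|\le 2$, so each outer summand is dominated on any compact $K_0\subset\Lambda^+\setminus\{(-\lambda_j)^{1/\alpha}\}$ by $2C_{K_0}\sum_{n=1}^\infty|a_n(z)p_{k,n}|$. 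The double absolute convergence $\sum_{k=1}^{K}\sum_{n=1}^\infty|a_n(z)p_{k,n}|<\infty$ from Lemma \ref{convergence} then supplies a dominating sequence in $k$; since each outer summand is analytic in $s$ by (b), a second Weierstrass argument yields the analyticity claimed in (c).

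The only nontrivial ingredient is the uniform resolvent bound; the subtlety there is that the bound must be uniform in $n$ (not merely pole-by-pole), which is why the dichotomy $\lambda_n \gtrless 2R^\alpha$ is needed. Everything else reduces to bookkeeping with Lemma \ref{convergence} and the standard theorem on uniform limits of holomorphic functions.
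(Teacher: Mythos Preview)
Your approach is essentially the paper's: Weierstrass $M$-test driven by the dichotomy $\lambda_n \gtrless 2R^\alpha$ for the resolvent bound, combined with the absolute convergence from Lemma~\ref{convergence}; parts (b) and (c) then follow by the standard uniform-limit-of-holomorphic-functions argument, exactly as you outline.

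The one noteworthy difference concerns (a). You explicitly weaken the conclusion to uniform convergence on \emph{compact subsets} of $\Lambda_R\setminus\{(-\lambda_j)^{1/\alpha}\}$, whereas the paper proves the statement as written---uniform convergence on the entire punctured disk. The reason you are forced onto compacta is that you insist on bounding $\lambda_n/(s^\alpha+\lambda_n)$ uniformly for \emph{all} $n\ge 1$, and this fails near the finitely many poles inside $\Lambda_R$. The paper sidesteps this: uniform convergence of a series only requires the \emph{tail} to go to zero uniformly, and for $n$ with $\lambda_n>2R^\alpha$ the bound $|\lambda_n/(s^\alpha+\lambda_n)|\le 2$ already holds on all of $\Lambda_R$ (the poles live only among the finitely many initial terms, which do not affect convergence). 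So your compactness step and the handling of the low-eigenvalue indices are unnecessary detours. That said, your weaker (a) is still sufficient for (b), (c), and the subsequent application in Lemma~\ref{lem-exp_eigen1}, so nothing downstream is harmed.
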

\begin{proof}
For $(a)$, fix $k$ and $R$, since $0<\lambda_1\le\cdots\le\lambda_n\le\cdots\to\infty$, 
there exists a large $N_1>0$ such that $\lambda_n>2R^\alpha$ for 
$n\ge N_1$. Then for $s\in \Lambda_R\setminus \{(-\lambda_j)^{1/\alpha}\}_{j=1}^\infty$ 
and $n\ge N_1$, 
\begin{equation*}
 |s^\alpha+\lambda_n|\ge |\re s^\alpha+\lambda_n|=\lambda_n+\re s^\alpha
 \ge \lambda_n-R^\alpha>0,
\end{equation*}
which gives 
\begin{equation*}
 |\lambda_n(s^\alpha+\lambda_n)^{-1}|=\lambda_n|s^\alpha+\lambda_n|^{-1}
 \le \lambda_n(\lambda_n-R^\alpha)^{-1}< 2.
\end{equation*}
Given $\epsilon>0,$ Lemma \ref{convergence} yields that there exists $N_2>0$ such that 
for $l\ge N_2$, $\sum_{n=l}^\infty|a_n(z)p_{k,n}|<\epsilon.$
So, for $l\ge \max\{N_1,N_2\}$ and $s\in \Lambda_R\setminus 
\{(-\lambda_j)^{1/\alpha}\}_{j=1}^\infty$, 
\begin{equation*}
 \Big|\sum_{n=l}^\infty a_n(z)p_{k,n}\lambda_n(s^\alpha+\lambda_n)^{-1}\Big| 
 \le \sum_{n=l}^\infty |a_n(z)p_{k,n}|\ |\lambda_n(s^\alpha+\lambda_n)^{-1}|
 \le 2\sum_{n=l}^\infty |a_n(z)p_{k,n}|<2\epsilon,
\end{equation*}
which implies the uniform convergence. \\

For $(b)$, with the definition of $\Lambda$ and $s^\alpha=e^{\alpha\ln s}$, it is clear that $a_n(z)p_{k,n}\lambda_n(s^\alpha+\lambda_n)^{-1}$ is holomorphic on $\Lambda_R\setminus \{(-\lambda_j)^{1/\alpha}\}_{j=1}^\infty$. Then the uniform convergence gives that the series
$\sum_{n=1}^\infty a_n(z)p_{k,n}\lambda_n(s^\alpha+\lambda_n)^{-1}$ is holomorphic, i.e. analytic on  $\Lambda_R\setminus \{(-\lambda_j)^{1/\alpha}\}_{j=1}^\infty$ for each $R>0$. Given $s\in\Lambda\setminus\{(-\lambda_j)^{1/\alpha}\}_{j=1}^\infty$, 
we can find $R>0$ such that $s\in \Lambda_R\setminus \{(-\lambda_j)^{1/\alpha}\}_{j=1}^\infty$, 
which means $\sum_{n=1}^\infty a_n(z)p_{k,n}\lambda_n(s^\alpha+\lambda_n)^{-1}$ is analytic on 
$\Lambda \setminus \{(-\lambda_j)^{1/\alpha}\}_{j=1}^\infty$, 
and completes the proof.\\

For $(c)$, on $(\Lambda_R\cap\Lambda^+)\setminus \{(-\lambda_j)^{1/\alpha}\}_{j=1}^\infty,$ 
we can see 
\begin{align*}
\sum_{k=1}^{K}|e^{-c_{k-1}s}-e^{-c_ks}|\Big[\sum_{n=1}^\infty 
 |a_n(z)p_{k,n}|&\ |\lambda_n(s^\alpha+\lambda_n)^{-1}|\Big]\\
 \le &2 \sum_{k=1}^K \sum_{n=1}^\infty 
 |a_n(z)p_{k,n}|\ |\lambda_n(s^\alpha+\lambda_n)^{-1}|.
\end{align*}
Then the proof for $(a)$ and Lemma \ref{convergence} give the uniform convergence of the above series 
on $(\Lambda_R\cap\Lambda^+)\setminus \{(-\lambda_j)^{1/\alpha}\}_{j=1}^\infty$ 
for $R>0$. Let $R$ be sufficiently large, the proof for $(b)$ ensures 
the analyticity result and completes the proof. 
\end{proof}

\subsection{Auxiliary results}
\begin{lem}
\label{lemma_uniqueness_1} 
Define $z_\ell:=(\cos{\theta_\ell},\sin{\theta_\ell})\in \partial\Omega
, \ \ell=1,2$ satisfying condition \eqref{condi-z}.
Then $p_n=0,\ n\in\mathbb{N}^+,$ provided that 
$$
\sum_{\lambda_n=\lambda_j} a_n(z_\ell)p_n=0, \ j\in\mathbb{N}^+,\ \ell=1,2.
$$ 
\end{lem}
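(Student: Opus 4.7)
The plan is to exploit the explicit structure of the eigenfunctions recalled in the earlier remark, namely that each eigenvalue $\lambda_j$ of $-\Delta$ on the unit disc has multiplicity at most two, combined with the two-point observation assumption \eqref{condi-z}.

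First I would reduce to a case distinction on the multiplicity of $\lambda_j$. If $\lambda_j$ is simple, then the unique $n$ with $\lambda_n=\lambda_j$ must satisfy $m(n)=0$ (since otherwise the pair $e^{\pm i|m(n)|\theta}$ would give multiplicity two). The hypothesis reduces to $a_n(z_\ell)p_n=0$ with $a_n(z_\ell)=\pi^{-1/2}\lambda_n^{-1/2}\neq 0$, so $p_n=0$ immediately.

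Next, if $\lambda_j$ has multiplicity two, then there are exactly two indices, say $n$ and $n+1$, with $\lambda_n=\lambda_{n+1}=\lambda_j$ and $m(n)=-m(n+1)=:m>0$. Plugging in $a_n(z_\ell)=\pi^{-1/2}\lambda_j^{-1/2}e^{im\theta_\ell}$ and $a_{n+1}(z_\ell)=\pi^{-1/2}\lambda_j^{-1/2}e^{-im\theta_\ell}$, the two hypotheses for $\ell=1,2$ become the linear system
\begin{equation*}
\begin{pmatrix} e^{im\theta_1} & e^{-im\theta_1} \\ e^{im\theta_2} & e^{-im\theta_2} \end{pmatrix}
\begin{pmatrix} p_n \\ p_{n+1} \end{pmatrix} = \begin{pmatrix} 0 \\ 0 \end{pmatrix}.
\end{equation*}
The determinant is $e^{-im(\theta_2-\theta_1)}-e^{im(\theta_2-\theta_1)}=-2i\sin\bigl(m(\theta_2-\theta_1)\bigr)$. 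The critical step is to invoke assumption \eqref{condi-z}: since $\theta_1-\theta_2\notin\pi\mathbb Q$ and $m\in\mathbb N^+$, we have $m(\theta_1-\theta_2)\notin\pi\mathbb Z$, hence $\sin(m(\theta_2-\theta_1))\neq 0$. The system then forces $p_n=p_{n+1}=0$.

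The argument is essentially linear-algebraic; there is no real obstacle beyond the bookkeeping of ensuring that the multiplicity-two situation produces a symmetric pair $m$, $-m$ with $m\neq 0$, which is exactly what the earlier remark on Bourget's hypothesis guarantees. The role of the irrationality condition \eqref{condi-z} is precisely to rule out degeneracy of the Vandermonde-type $2\times 2$ determinant for \emph{every} positive integer $m$ simultaneously, which is why an irrationality hypothesis (rather than a mere non-integer condition) is required.
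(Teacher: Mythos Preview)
Your proof is correct and follows essentially the same route as the paper: the same case split on multiplicity (equivalently, on whether $m=0$), the same $2\times 2$ linear system in the multiplicity-two case, and the same determinant computation $2i\sin(|m|(\theta_1-\theta_2))\neq 0$ via condition \eqref{condi-z}. The only differences are cosmetic (you write $m>0$ and $-2i\sin(m(\theta_2-\theta_1))$, the paper uses $|m|$ and the sign-equivalent $2i\sin(|m|(\theta_1-\theta_2))$).
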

\begin{proof}
 Given $j\in \mathbb{N}^+$, if $m(n(j))\ne 0$, 
 letting $n(j),\ n(j)+1$ be the integers such that 
 $\lambda_n=\lambda_j$, then  
 \begin{equation*}
  \sum_{\lambda_n=\lambda_j} a_n(z_\ell)p_n 
  =\pi^{-1/2}\lambda_j^{-1/2}(e^{i|m|\theta_\ell}p_{n(j)}+
  e^{-i|m|\theta_\ell}p_{n(j)+1})=0,\quad \ell=1,2, 
 \end{equation*}
which gives 
\begin{equation*}
 \begin{bmatrix}
  e^{i|m|\theta_1}& e^{-i|m|\theta_1}\\
  e^{i|m|\theta_2}& e^{-i|m|\theta_2}
 \end{bmatrix}
  \begin{bmatrix}
  p_{n(j)}\\p_{n(j)+1}
 \end{bmatrix}
 =\begin{bmatrix}
   0\\0
  \end{bmatrix}.
\end{equation*}
The determinant of the matrix is 
\begin{equation*}
 e^{i|m|(\theta_1-\theta_2)}-e^{-i|m|(\theta_1-\theta_2)}
 =2i\sin{(|m|(\theta_1-\theta_2))}\ne 0,
\end{equation*}
by condition \eqref{condi-z} and $m\ne 0$. 
Hence, we have $p_{n(j)}=p_{n(j)+1}=0$. 

In the case of $m(n(j))=0$, it holds that 
 \begin{equation*}
  \sum_{\lambda_n=\lambda_j} a_n(z_\ell)p_n 
  =\pi^{-1/2}\lambda_j^{-1/2}p_{n(j)}=0,\quad \ell=1,2, 
 \end{equation*}
sequentially $p_{n(j)}=0$. Since $j$ is chosen arbitrarily, 
the proof is complete. 
\end{proof}

\begin{lem}\label{lem-exp_eigen1} 
Under conditions \eqref{condi-z} and \eqref{condition_alpha}, assume
$\alpha\ne\tilde\alpha$ and the series\\ 
$\{\sum_{n=1}^\infty a_n(z_\ell) p_n, 
\sum_{n=1}^\infty a_n(z_\ell) \tilde p_n: \ell=1,2\}$ are absolutely convergent,  
if there exists $\epsilon>0$ such that for $t\in(0,\epsilon),\ \ell=1,2,$
\begin{equation} \label{equality_1}
 \sum_{n=1}^\infty a_n(z_\ell) \lambda_n [p_n t^{\alpha-1} E_{\alpha,\alpha}(-\lambda_n t^\alpha)-\tilde p_n t^{\tilde\alpha-1}
E_{\tilde\alpha,\tilde\alpha}(-\lambda_n t^{\tilde\alpha})]=0,
\end{equation}
then $p_n=\tilde p_n=0,\ n\in \mathbb{N}^+$.
 \end{lem}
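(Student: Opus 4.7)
The plan is to convert the equality \eqref{equality_1} into a meromorphic identity on the branch $\Lambda$ via the Laplace transform, and then exploit that under condition \eqref{condition_alpha} the two families of poles lie on distinct rays of $\Lambda$.

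First I would verify that each summand of \eqref{equality_1}, viewed as a function of $t>0$, is real-analytic, and that Lemma \ref{lem-ml-asymp} together with the absolute convergence of $\sum_n|a_n(z_\ell)p_n|$ and $\sum_n|a_n(z_\ell)\tilde p_n|$ yields uniform convergence on every compact subset of $(0,\infty)$. Hence each side is real-analytic on $(0,\infty)$, so the equation extends from $(0,\epsilon)$ to all $t>0$. The same estimate also bounds the integrand pointwise by an integrable envelope against $e^{-st}$ for $s\in\Lambda^+$, which legitimizes applying the Laplace transform termwise. By Lemma \ref{lem-lap-ml}, this produces, for $s\in\Lambda^+$ and $\ell=1,2$, the identity
$$\sum_{n=1}^\infty a_n(z_\ell)\lambda_n\left[\frac{p_n}{s^\alpha+\lambda_n}-\frac{\tilde p_n}{s^{\tilde\alpha}+\lambda_n}\right]=0.$$

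Arguing exactly as in Lemma \ref{analytic}(a)--(b), each of the two series defines a meromorphic function on the whole branch $\Lambda$ whose only singularities are simple poles at $\{\lambda_n^{1/\alpha}e^{i\pi/\alpha}\}_{n}$ and $\{\lambda_n^{1/\tilde\alpha}e^{i\pi/\tilde\alpha}\}_{n}$ respectively, and the identity therefore persists on $\Lambda$ minus those two pole sets. The decisive separation observation is that since $\alpha,\tilde\alpha\in(1/2,1)$ and $\alpha\ne\tilde\alpha$, the exponents $1/\alpha$ and $1/\tilde\alpha$ lie in $(1,2)$ and differ, so $\pi/\alpha$ and $\pi/\tilde\alpha$ are distinct points of $(\pi,2\pi)\subset[0,2\pi)$; in particular, the two pole rays sit strictly inside $\Lambda$ and are disjoint. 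This is precisely where hypothesis \eqref{condition_alpha} is used. Fixing any distinct eigenvalue $\lambda_j$ and the corresponding pole $s_0=\lambda_j^{1/\alpha}e^{i\pi/\alpha}$ of the $\alpha$-series, the $\tilde\alpha$-series is holomorphic at $s_0$, whereas only indices $n$ with $\lambda_n=\lambda_j$ produce singular contributions in the $\alpha$-series, and a direct residue computation gives
$$\mathrm{Res}_{s=s_0}\sum_{n=1}^\infty\frac{a_n(z_\ell)\lambda_n p_n}{s^\alpha+\lambda_n}=\frac{\lambda_j}{\alpha\,s_0^{\alpha-1}}\sum_{\lambda_n=\lambda_j}a_n(z_\ell)p_n.$$
The Laplace-side identity forces this residue to vanish, so $\sum_{\lambda_n=\lambda_j}a_n(z_\ell)p_n=0$ for $\ell=1,2$ and every $j\in\mathbb N^+$; Lemma \ref{lemma_uniqueness_1} then gives $p_n=0$ for all $n$. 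The same argument applied at the poles $\lambda_j^{1/\tilde\alpha}e^{i\pi/\tilde\alpha}$ yields $\tilde p_n=0$.

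The main obstacle I expect is the rigorous justification of termwise residue extraction in an infinite series: one needs a uniform-convergence bound on a small punctured disk around $s_0$, in the spirit of the lower bound $|s^\alpha+\lambda_n|\ge\lambda_n-R^\alpha$ used in Lemma \ref{analytic}(a), so that after isolating the finitely many indices with $\lambda_n=\lambda_j$ the remaining tail is holomorphic near $s_0$ and contributes no residue. A secondary subtlety is tracking the branch of $s^\alpha$ consistently so that $s_0$ genuinely lies in $\Lambda$ and the factor $(\alpha s_0^{\alpha-1})^{-1}$ is nonzero; both of these are guaranteed exactly by $\alpha,\tilde\alpha>1/2$ in \eqref{condition_alpha}.
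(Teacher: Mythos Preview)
Your proposal is correct and follows essentially the same route as the paper: extend \eqref{equality_1} from $(0,\epsilon)$ to $(0,\infty)$ by analyticity, take the Laplace transform termwise to obtain the identity $\sum_n a_n(z_\ell)\lambda_n[p_n(s^\alpha+\lambda_n)^{-1}-\tilde p_n(s^{\tilde\alpha}+\lambda_n)^{-1}]=0$, continue it meromorphically to $\Lambda$ via Lemma~\ref{analytic}, separate the two pole families using \eqref{condition_alpha} and $\alpha\ne\tilde\alpha$, extract the eigenvalue-block conditions at each pole, and finish with Lemma~\ref{lemma_uniqueness_1}. The only cosmetic difference is that you phrase the pole extraction as a residue computation whereas the paper argues by boundedness of the complementary terms as $s\to(-\lambda_j)^{1/\alpha}$; your anticipated ``main obstacle'' (isolating the singular block via a uniform tail bound) is exactly what the paper handles by invoking the proof of Lemma~\ref{analytic}(a).
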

\begin{proof}
 First we claim that the series in \eqref{equality_1} are real analytic 
 on $(\epsilon/2,\infty)$. In order to prove it, we utilize the knowledge in complex analysis and 
 extend $t$ to the branch $\Lambda_0:=\{\rho e^{i\zeta}\in \mathbb{C}: 
 \rho\in(0,\infty), \zeta\in(-\pi,\pi]\}\subset \mathbb{C}$. 
 Obviously, $t^\alpha, t^{\alpha-1}, t^{\tilde\alpha}, t^{\tilde\alpha-1}$ 
 are holomorphic on $\Lambda_0$. 
 Also, by Lemma \ref{lem-ml-asymp}, there exists a small enough $\zeta_0>0$ 
 depending on $\alpha, \tilde\alpha$ such that 
 $\lambda_nt^{\alpha-1}E_{\alpha,\alpha}(-\lambda_n t^\alpha), 
 \lambda_nt^{\tilde\alpha-1}E_{\tilde\alpha,\tilde\alpha}(-\lambda_n t^{\tilde\alpha})$ 
 are uniformly bounded on the subset 
 $\{\rho e^{i\zeta}\in\Lambda_0:\rho\in(\epsilon/2,\infty),|\zeta|\le\zeta_0\}$. 
 These and the absolute convergence conditions yield that the series in \eqref{equality_1}
 is analytic on $\{\rho e^{i\zeta}\in\Lambda_0:\rho\in(\epsilon/2,\infty),|\zeta|\le\zeta_0\}.$ 
 Since $(\epsilon/2,\infty)$ is included by this subset, the claim is valid. 
 
 From this claim and \eqref{equality_1}, we conclude that 
\begin{equation*}
 \sum_{n=1}^\infty a_n(z_\ell) \lambda_n [p_n t^{\alpha-1} E_{\alpha,\alpha}(-\lambda_n t^\alpha)-\tilde p_n t^{\tilde\alpha-1}
E_{\tilde\alpha,\tilde\alpha}(-\lambda_n t^{\tilde\alpha})]=0, 
\ t\in(0,\infty),\ \ell=1,2.
\end{equation*}
Taking Laplace transform on the above equality, Lemma \ref{mittagleffler_L1} 
and the absolute convergence conditions ensure that Dominated Convergence 
Theorem can be used. Then by termwise calculation, we have for $s\in\Lambda^+\setminus\{(-\lambda_j)^{1/\alpha}, 
(-\lambda_j)^{1/\tilde \alpha}\}_{j=1}^\infty,\ \ell=1,2,$
\begin{align*}
0=& \L\left\{\sum_{n=1}^\infty a_n(z_\ell) \lambda_n [p_n t^{\alpha-1} E_{\alpha,\alpha}(-\lambda_n t^\alpha)-\tilde p_n t^{\tilde\alpha-1}
E_{\tilde\alpha,\tilde\alpha}(-\lambda_n t^{\tilde\alpha})];s\right\}
\\
=&\sum_{n=1}^\infty a_n(z_\ell)\lambda_n\big[p_n(s^\alpha+\lambda_n)^{-1}-\tilde p_n(s^{\tilde\alpha}
+\lambda_n)^{-1}\big].
\end{align*}
Following the proof of Lemma \ref{analytic}, we have that the above series 
is analytic on $\Lambda\setminus\{(-\lambda_j)^{1/\alpha}, 
(-\lambda_j)^{1/\tilde \alpha}\}_{j=1}^\infty$, then 
\begin{align*}
 \sum_{n=1}^\infty a_n(z_\ell)\lambda_n
\big[p_n(s^\alpha+\lambda_n)^{-1}
&-\tilde p_n(s^{\tilde\alpha}+\lambda_n)^{-1}\big]=0,\\
&s\in \Lambda\setminus\{(-\lambda_j)^{1/\alpha}, 
(-\lambda_j)^{1/\tilde \alpha}\}_{j=1}^\infty,\ \ell=1,2.
\end{align*}

Not hard to see that $\{(-\lambda_j)^{1/\alpha}\}_{j=1}^\infty\cap
\{(-\lambda_j)^{1/\tilde \alpha}\}_{j=1}^\infty=\emptyset$. Assume not, 
then there exist $j_1,j_2$ such that $(-\lambda_{j_1})^{1/\alpha}
=(-\lambda_{j_2})^{1/\widetilde \alpha}$. This gives 
$\pi(\alpha^{-1}-\tilde \alpha^{-1})=0,$ 
which leads to $\alpha=\widetilde \alpha$ and contradicts with our assumption. 
Also, due to $\{\lambda_j\}_{j=1}^\infty$ is strictly increasing and tends to infinity, 
$\{(-\lambda_j)^{1/\alpha}\}_{j=1}^\infty$ and 
$\{(-\lambda_j)^{1/\tilde \alpha}\}_{j=1}^\infty$ do not contain accumulation 
points. Furthermore, the definition of $\Lambda$ and condition 
\eqref{condition_alpha} ensure that 
$\{(-\lambda_j)^{1/\alpha}, (-\lambda_j)^{1/\tilde \alpha}\}_{j=1}^\infty\subset \Lambda$. These and the proof of Lemma \ref{analytic} $(a)$ 
give that for each $j\in \mathbb N^+$,  
\begin{align*}
 &\lim_{s\to (-\lambda_j)^{1/\alpha}} \Big|\sum_{\lambda_n=\lambda_j} 
 a_n(z_\ell)\lambda_jp_n(s^\alpha+\lambda_j)^{-1}\Big|\\
 &=\lim_{s\to (-\lambda_j)^{1/\alpha}}
 \Big| \sum_{n=1}^\infty a_n(z_\ell)\lambda_n\tilde p_n
 (s^{\tilde\alpha}+\lambda_n)^{-1} -\sum_{\lambda_n\ne\lambda_j}
 a_n(z_\ell)\lambda_np_n(s^\alpha+\lambda_n)^{-1}\Big|<\infty,
\end{align*}
which leads to $\sum_{\lambda_n=\lambda_j} a_n(z_\ell)p_n=0,
\ j\in \mathbb{N}^+,\ \ell=1,2$. 
Similarly, we can obtain $\sum_{\lambda_n=\lambda_j} 
a_n(z_\ell)\tilde{p}_n=0,\ j\in \mathbb{N}^+,\ \ell=1,2$.
From Lemma \ref{lemma_uniqueness_1}, these give the desired result and 
complete the proof. 
\end{proof}

\begin{lem}
\label{lem-exp_eigen2} 
Keep the same conditions in Lemma \ref{lem-exp_eigen1} and 
assume 
\begin{equation}\label{real}
 \Big\{\sum_{\lambda_n=\lambda_j}a_n(z_\ell)p_n,   \sum_{\lambda_n=\lambda_j}a_n(z_\ell)\tilde p_n:j\in\mathbb N^+, \ \ell=1,2\Big\}\subset\mathbb{R}.
\end{equation}
Given $\epsilon>0$, then  
\begin{equation*}
\label{eq-exp_eigen}
\lim_{\substack{\re s\to\infty\\\arg{s}\in[0,\pi/2)}} 
e^{\epsilon s} \sum_{n=1}^\infty 
a_n(z_\ell)\lambda_n [p_n(s^\alpha + \lambda_n)^{-1}-\tilde{p}_n
(s^{\tilde\alpha}+ \lambda_n)^{-1}] = 0, \ \ell=1,2,
\end{equation*}
leads to $p_n=\tilde p_n=0,\ n\in\mathbb N^+$.
\end{lem}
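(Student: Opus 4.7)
The plan is to reduce to Lemma \ref{lem-exp_eigen1} by Laplace inversion. Introduce the time-domain counterpart
\[
f_\ell(t) := \sum_{n=1}^\infty a_n(z_\ell)\lambda_n\bigl[p_n t^{\alpha-1} E_{\alpha,\alpha}(-\lambda_n t^\alpha) - \tilde p_n t^{\tilde\alpha-1} E_{\tilde\alpha,\tilde\alpha}(-\lambda_n t^{\tilde\alpha})\bigr].
\]
The Mittag-Leffler bound of Lemma \ref{lem-ml-asymp} together with the absolute convergence of $\sum_n a_n(z_\ell)p_n$ and its tilde version gives $|f_\ell(t)|\le Ct^{-1}$ on $(0,\infty)$, and the complex-sector argument from the proof of Lemma \ref{lem-exp_eigen1} shows that $f_\ell$ is real analytic on $(0,\infty)$. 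Via Lemma \ref{lem-lap-ml} and termwise dominated convergence, the series in the hypothesis equals $\mathcal{L}\{f_\ell\}(s)$ on $\Lambda^+$. Splitting $\mathcal{L}\{f_\ell\}(s) = \int_0^\epsilon e^{-st}f_\ell(t)\,dt + e^{-\epsilon s}\mathcal{L}\{f_\ell(\cdot+\epsilon)\}(s)$ and noting that $\mathcal{L}\{f_\ell(\cdot+\epsilon)\}(s)\to 0$ as $\re s\to\infty$ by dominated convergence (since $f_\ell$ is bounded on $[\epsilon,\infty)$), the hypothesis reduces to
\[
\Phi(s) := \int_0^\epsilon e^{s(\epsilon-t)} f_\ell(t)\,dt \;\longrightarrow\; 0 \qquad \text{as } \re s\to\infty,\ \arg s\in[0,\pi/2),
\]
where $\Phi$ is entire of exponential type $\epsilon$, bounded on $\{\re s\le 0\}$ by $\|f_\ell\|_{L^1(0,\epsilon)}$, and real on $\mathbb{R}$ by \eqref{real}. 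The goal then becomes: show $f_\ell\equiv 0$ on $(0,\epsilon)$; this is exactly condition \eqref{equality_1}, so Lemma \ref{lem-exp_eigen1} finishes the argument.

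The crux is to conclude $f_\ell\equiv 0$ on $(0,\epsilon)$ from $\Phi(s)\to 0$ on the positive real axis. Substituting $\tau=\epsilon-t$ gives $\Phi(s)=\int_0^\epsilon e^{s\tau} f_\ell(\epsilon-\tau)\,d\tau$, which for $s\to+\infty$ concentrates near $\tau=\epsilon$, i.e., near the small-$t$ behavior of $f_\ell$. Inserting the power series of $E_{\alpha,\alpha}, E_{\tilde\alpha,\tilde\alpha}$ and truncating the eigenmode sum at $\lambda_n\le N$, Watson's lemma produces an asymptotic expansion of $\Phi$ in negative powers of $s$ whose coefficients are weighted eigenmode sums; the remainder is uniformly controlled in $N$ via $|\lambda_n t^{\alpha-1}E_{\alpha,\alpha}(-\lambda_n t^\alpha)|\le Ct^{-1}$. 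The condition $\Phi(s)\to 0$ then forces each asymptotic coefficient to vanish. Peeling these vanishings eigenvalue cluster by cluster---using the disjointness of the pole sets $\{(-\lambda_j)^{1/\alpha}\}$ and $\{(-\lambda_j)^{1/\tilde\alpha}\}$ established in the proof of Lemma \ref{lem-exp_eigen1}, together with $\alpha,\tilde\alpha\in(1/2,1)$ and $\alpha\ne\tilde\alpha$---isolates the per-eigenvalue sums $\sum_{\lambda_n=\lambda_j}a_n(z_\ell)p_n$ and $\sum_{\lambda_n=\lambda_j}a_n(z_\ell)\tilde p_n$, forcing each to be zero for every $j$ and $\ell=1,2$; Lemma \ref{lemma_uniqueness_1} then yields $p_n=\tilde p_n=0$.

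The main obstacle is the rigorous Watson-lemma step: the formal higher-order coefficients $\sum_n a_n(z_\ell)p_n\lambda_n^j$ for $j\ge 1$ are generally not absolutely convergent under the given assumptions (only the $j=0$ sums are controlled), so a careful truncation-and-remainder scheme, coupled with the uniform $Ct^{-1}$ bound above, is needed to legitimize the asymptotic. A secondary subtlety is that when $\alpha/\tilde\alpha$ is rational, the exponents $\{\alpha j,\tilde\alpha j\}_{j\ge 1}$ can coincide, so the separation of $\alpha$- and $\tilde\alpha$-contributions must be effected through the pole-location analysis of Lemma \ref{lem-exp_eigen1} rather than by simple exponent distinctness. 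An alternative finish, once one extracts $f_\ell\equiv 0$ on even a single open subinterval, is to invoke the real analyticity of $f_\ell$ on $(0,\infty)$ to extend the vanishing and then apply Lemma \ref{lem-exp_eigen1} directly.
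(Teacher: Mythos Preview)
Your reduction to the entire function $\Phi(s)=\int_0^\epsilon e^{s(\epsilon-t)}f_\ell(t)\,dt$ is exactly the paper's, and you correctly record the three key properties: $\Phi$ is entire, bounded on $\{\re s\le 0\}$, and real on $\mathbb{R}$ by \eqref{real}. But at this point you veer off into Watson's lemma, when the paper simply applies Liouville's theorem. Since $\Phi$ is real on the real axis, Schwarz reflection gives $\Phi(\overline s)=\overline{\Phi(s)}$, so the hypothesis (which bounds $\Phi$ on the first quadrant) immediately bounds $\Phi$ on the fourth quadrant as well. Together with the left-half-plane bound and an elementary strip estimate (using $\int_0^\epsilon\lambda_n t^{\alpha-1}E_{\alpha,\alpha}(-\lambda_n t^\alpha)\,dt\le 1$ from Lemma~\ref{mittagleffler_L1}, not your $Ct^{-1}$ pointwise bound, which is not $L^1$ near $0$), $\Phi$ is a bounded entire function, hence constant, hence identically $0$ since the limit is $0$. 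Then $0=e^{-\epsilon s}\Phi(s)=\mathcal{L}\{\chi_{(0,\epsilon)}f_\ell\}(s)$ forces $f_\ell\equiv 0$ on $(0,\epsilon)$, which is \eqref{equality_1}, and Lemma~\ref{lem-exp_eigen1} finishes.

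The Watson's-lemma route has precisely the obstruction you flag and no clean way around it: the higher-order coefficients $\sum_n a_n(z_\ell)p_n\lambda_n^j$ for $j\ge 1$ are not controlled by the assumed absolute convergence, and a truncation in $n$ cannot yield a genuine asymptotic expansion of $\Phi$ because the tail, estimated only by $Ct^{-1}$, contributes at the same order as the leading terms. The pole-disjointness you borrow from Lemma~\ref{lem-exp_eigen1} lives in the Laplace domain and does not mesh with a time-domain Watson expansion; it is, however, exactly what Lemma~\ref{lem-exp_eigen1} uses \emph{after} $f_\ell\equiv 0$ on $(0,\epsilon)$ is established. In short, you listed the decisive hypothesis \eqref{real} but did not exploit it: it is there to supply the reflection symmetry that makes Liouville work.
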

\begin{proof}
The absolute convergence conditions ensure that Dominated Convergence Theorem can be used here.  
Then the order of integration and summation can be exchanged 
and we will not emphasize it in each step. 

Lemma \ref{lem-lap-ml} gives that for $\ell=1,2,\ s\in \Lambda^+,$ 
\begin{equation*}
\begin{aligned}
\sum_{n=1}^\infty 
a_n(z_\ell)\lambda_n& [p_n(s^\alpha + \lambda_n)^{-1}-\tilde{p}_n
(s^{\tilde\alpha}+ \lambda_n)^{-1}]\\
&=\sum_{n=1}^\infty a_n(z_\ell)\lambda_n\int_0^\infty e^{-st} 
\big[p_n t^{\alpha-1} E_{\alpha,\alpha}(-\lambda_n t^\alpha)
-\tilde p_n t^{\tilde\alpha-1}E_{\tilde\alpha,\tilde\alpha}
(-\lambda_n t^{\tilde\alpha})\big]\ dt,
\end{aligned}
\end{equation*}
which leads to 
\begin{align*}
 &e^{\epsilon s}\sum_{n=1}^\infty 
a_n(z_\ell)\lambda_n [p_n(s^\alpha + \lambda_n)^{-1}-\tilde{p}_n
(s^{\tilde\alpha}+ \lambda_n)^{-1}]\\
&=\sum_{n=1}^\infty a_n(z_\ell)\lambda_n\int_0^\infty e^{(\epsilon-t)s} 
\big[p_n t^{\alpha-1} E_{\alpha,\alpha}(-\lambda_n t^\alpha)
-\tilde p_n t^{\tilde\alpha-1}E_{\tilde\alpha,\tilde\alpha}
(-\lambda_n t^{\tilde\alpha})\big]\ dt\\
&=\sum_{n=1}^\infty a_n(z_\ell)\lambda_n\Big[\int_0^\epsilon\cdots dt\Big]
+\sum_{n=1}^\infty a_n(z_\ell)\lambda_n\Big[\int_\epsilon^\infty\cdots dt\Big]\\
&=:I_1^\ell(s) + I_2^\ell(s).
\end{align*} 

From the definition of $I_1^\ell(s)$, Lemma \ref{mittagleffler_L1} 
and the proof of Lemma \ref{analytic}, $I_1^\ell(s),\ \ell=1,2$ are 
well-defined and holomorphic on $\mathbb C$, namely, entire functions.  
Lemma \ref{lem-ml-asymp} and the absolute convergence conditions give that  
\begin{align*}
|I_2^\ell(s)|\le C\int_\epsilon^\infty|e^{(\epsilon-t)s}|\ |t^{-1}|\ dt
\le C\epsilon^{-1}\int_\epsilon^\infty e^{(\epsilon-t)\re s}\ dt
=C/\re s,\ s\in\Lambda^+,\ \ell=1,2.
\end{align*}
Hence $I_2^\ell(s)$ tends to $0$ as $\re s\to\infty$. 
Recalling the limit assumption in this lemma, we have 
$$\lim_{\substack{\re s\to\infty\\\arg{s}\in[0,\pi/2)}} 
I_1^\ell(s)=\lim_{\substack{\re s\to\infty\\\arg{s}\in[0,\pi/2)}} 
[I_1^\ell(s)+I_2^\ell(s)]-\lim_{\substack{\re s\to\infty\\ \arg{s}\in[0,\pi/2)}} I_2^\ell(s)=0.$$ 
This means that $I_1^\ell(s)$ is bounded on the quarter plane 
$\{s\in\mathbb{C}: \arg s\in[0,\pi/2)\}$, and we denote the upper bound by $C_0$. 
For $s\in\mathbb{C}$ with $\arg s\in (-\pi/2,0),$ obviously 
its conjugate satisfies $\arg{\overline s}\in [0,\pi/2)$.  
Then the straightforward calculation and condition \eqref{real} give  that $I_1^\ell(\overline s)$ 
is the complex conjugate of $I_1^\ell(s)$, which leads to 
$|I_1^\ell(s)|=|I_1^\ell(\overline s)|\le C_0$. 
In the case of $\re s\le 0$, not hard to show that $I_1^\ell(s)$ is 
bounded also, in view of Lemma \ref{mittagleffler_L1} and the absolute 
convergence conditions.  

Thus, $I_1^\ell(s),\ \ell=1,2$ are bounded entire functions, 
and by Liouville's Theorem, $I_1^\ell(s)\equiv C$. 
Considering the limit result, it holds that  
$I_1^\ell(s)\equiv 0,\ s\in \mathbb{C},\ \ell=1,2$. This gives  
for $\re s>0$, 
\begin{equation*}
\begin{aligned}
0&=e^{-\epsilon s}I_1^\ell(s)=
\sum_{n=1}^\infty a_n(z_\ell)\lambda_n\int_0^\epsilon e^{-st} 
\Big[p_n t^{\alpha-1} E_{\alpha,\alpha}(-\lambda_n t^\alpha)
-\tilde p_n t^{\tilde\alpha-1}E_{\tilde\alpha,\tilde\alpha}(-\lambda_n t^{\tilde\alpha})\Big]\ dt\\
&=\L\Bigg\{\chi_{{}_{t\in(0,\epsilon)}}\sum_{n=1}^\infty a_n(z_\ell)\lambda_n  
\Big[p_n t^{\alpha-1} E_{\alpha,\alpha}(-\lambda_n t^\alpha)
-\tilde p_n t^{\tilde\alpha-1}E_{\tilde\alpha,\tilde\alpha}(-\lambda_n t^{\tilde\alpha})\Big]; s\Bigg\},
\end{aligned}
\end{equation*}
which leads to 
$$
\sum_{n=1}^\infty a_n(z_\ell) \lambda_n
\Big[p_n t^{\alpha-1} E_{\alpha,\alpha}(-\lambda_n t^\alpha)
-\tilde p_n t^{\tilde\alpha-1}E_{\tilde\alpha,\tilde\alpha}(-\lambda_n t^{\tilde\alpha})\Big]=0,\ t\in(0,\epsilon),\ \ell=1,2.
$$
This and Lemma \ref{lem-exp_eigen1} yield the desired result.
\end{proof}

Letting $\tilde p_n=0,\ n\in\mathbb N^+$ in Lemma \ref{lem-exp_eigen2}, 
the next corollary can be deduced straightforwardly. 
\begin{coro}\label{cor-exp_eigen3}
 With the conditions in Lemma \ref{lem-exp_eigen2},   
\begin{equation*}
\lim_{\substack{\re s\to\infty\\ \arg{s}\in[0,\pi/2)}} e^{\epsilon s} \sum_{n=1}^\infty a_n(z_\ell) \lambda_np_n(s^\alpha + \lambda_n)^{-1} = 0,\ \ell=1,2, 
\ \epsilon>0, 
\end{equation*}
implies that $p_n=0,\ n\in\mathbb N^+$.
\end{coro}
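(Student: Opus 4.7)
The plan is to view this statement as a degenerate specialization of Lemma~\ref{lem-exp_eigen2}: set $\tilde p_n=0$ for every $n\in\mathbb N^+$. Under this choice the summand $\tilde p_n(s^{\tilde\alpha}+\lambda_n)^{-1}$ inside Lemma~\ref{lem-exp_eigen2}'s hypothesis vanishes identically, so that lemma's limit assumption reduces word-for-word to the one in the corollary. Its conclusion $p_n=\tilde p_n=0$ then gives $p_n=0$ for all $n\in\mathbb N^+$, which is exactly what we want.

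What must be verified is that every hypothesis passed along from Lemma~\ref{lem-exp_eigen2} survives the substitution. The geometric condition \eqref{condi-z} on $z_1,z_2$ and the range condition \eqref{condition_alpha} on $\alpha$ are inherited directly from the phrase ``same conditions in Lemma~\ref{lem-exp_eigen2}''; so are the absolute convergence of $\sum_n a_n(z_\ell)p_n$ and the reality of the partial sums $\sum_{\lambda_n=\lambda_j}a_n(z_\ell)p_n$ in \eqref{real}. The corresponding requirements on the $\tilde p_n$ are tautological: every quantity built from $\tilde p_n\equiv 0$ is zero and hence trivially real and absolutely convergent.

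The only clause needing a moment's care is the standing assumption $\alpha\ne\tilde\alpha$ of Lemma~\ref{lem-exp_eigen2}. This is easily accommodated: since $\tilde p_n\equiv 0$ eliminates $\tilde\alpha$ from every expression actually appearing in the argument, I am free to pick any auxiliary $\tilde\alpha\in(1/2,1)\setminus\{\alpha\}$ and invoke Lemma~\ref{lem-exp_eigen2} with this choice. I do not anticipate any real obstacle; the corollary is a direct specialization of the lemma just proved, and the only effort lies in recording that the degenerate choice $\tilde p_n\equiv 0$ preserves all prerequisites.
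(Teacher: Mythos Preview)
Your proposal is correct and matches the paper's own justification, which simply reads ``Letting $\tilde p_n=0,\ n\in\mathbb N^+$ in Lemma~\ref{lem-exp_eigen2}, the next corollary can be deduced straightforwardly.'' Your extra care in choosing an auxiliary $\tilde\alpha\in(1/2,1)\setminus\{\alpha\}$ to satisfy the standing hypothesis $\alpha\ne\tilde\alpha$ is a nice touch that the paper leaves implicit.
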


\subsection{Proof of Theorem \ref{thm-unique}}
Now we are ready to prove our main theorem.
\begin{proof}[Proof of Theorem \ref{thm-unique}] 
To shorten our proof, we define 
\begin{equation*}
 P_k^\ell(s):=\sum_{n=1}^\infty a_n(z_\ell)p_{k,n}
 \lambda_n(s^\alpha+\lambda_n)^{-1},
 \ \tilde P_k^\ell(s):= \sum_{n=1}^\infty 
 a_n(z_\ell)\tilde p_{k,n}\lambda_n
 (s^{\tilde\alpha}+\lambda_n)^{-1}.
\end{equation*}
Then from \eqref{laplace} and Lemma \ref{analytic}, we have 
for $s\in\Lambda^+\setminus\{(-\lambda_j)^{1/\alpha}, 
(-\lambda_j)^{1/\tilde \alpha}\}_{j=1}^\infty,\ \ell=1,2,$
\begin{equation}\label{equality_2}
 \sum_{k=1}^{K}(e^{-c_{k-1}s}-e^{-c_ks})P_k^\ell(s)
 =\sum_{k=1}^{\tilde K}(e^{-\tilde c_{k-1}s}-e^{-\tilde c_ks})
 \tilde P_k^\ell(s).
\end{equation}
We first prove that $c_0=\tilde c_0$. If not, assume   
$c_0<\tilde c_0$ without loss of generality, then multiplying $e^{(c_0+\epsilon)s}$ 
with sufficiently small $\epsilon>0$ such that 
$\epsilon<\min\{\tilde c_0-c_0, c_1-c_0\}$ on \eqref{equality_2} gives 
\begin{equation}\label{equality_4}
\begin{aligned}
e^{\epsilon s} P_1^\ell(s)
 =&e^{(\epsilon+c_0-c_1)s}P_1^\ell(s)
  -\sum_{k=2}^{K}(e^{(\epsilon+c_0-c_{k-1})s}-e^{(\epsilon+c_0-c_k)s})
  P_k^\ell(s)\\
 &+\sum_{k=1}^{\tilde K}(e^{(\epsilon+c_0-\tilde c_{k-1})s}
 -e^{(\epsilon+c_0-\tilde c_k)s})\tilde P_k^\ell(s).
\end{aligned}
\end{equation}
For $s\in \Lambda^+$ with $\arg{s}\in[0,\pi/2)$, not hard to see 
$\arg{s^\alpha}=\alpha \arg{s}\in [0,\pi/2)$, i.e. $\re {s^\alpha}>0$. 
Then 
$$|\lambda_n(s^\alpha+\lambda_n)^{-1}|\le \lambda_n 
(\re s^\alpha +\lambda_n)^{-1}\le 1.$$
This with Lemma \ref{convergence} yields that 
\begin{equation*}
\begin{aligned}
\lim_{\substack{\re s\to\infty\\\arg{s}\in[0,\pi/2)}}
\Big|\sum_{k=2}^{K}(e^{(\epsilon+c_0-c_{k-1})s}&-e^{(\epsilon+c_0-c_k)s})
  P_k^\ell(s) \Big|\\
 &\le \lim_{\substack{\re s\to\infty\\\arg{s}\in[0,\pi/2)}} 
 2e^{(\epsilon+c_0-c_1)\re s} 
 \sum_{k=2}^{K}\sum_{n=1}^\infty |a_n(z_\ell)p_{k,n}| =0.
 \end{aligned} 
\end{equation*}  
Analogously, we can show other terms in the right side of \eqref{equality_4} 
tend to zero as $\re s\to\infty,\ s\in \Lambda,\ \arg{s}\in[0,\pi/2)$. 
Now we have  
$$
\lim_{\substack{\re s\to\infty\\ \arg{s}\in[0,\pi/2)}} e^{\epsilon s} 
P_1^\ell(s) = 0,\ \ell=1,2,\ \epsilon>0.
$$
Not hard to check that $\{a_n(z_\ell)p_{1,n}:n\in\mathbb N^+,\ \ell=1,2\}$ satisfies condition 
\eqref{real}. Then with Corollary \ref{cor-exp_eigen3}, it holds that 
$p_{1,n}=0,\ n\in \mathbb N^+$, i.e. $\|p_1\|_{L^2(\Omega)}=0$, which 
contradicts with Assumption \ref{condition}. Hence, we have  
$c_0=\tilde c_0$. 

Next we prove $\alpha=\tilde\alpha$. Assume $\alpha<\tilde\alpha$ 
without loss of generality and 
pick the small $\epsilon>0$ satisfying $\epsilon<\min\{c_1-c_0,\tilde c_1-\tilde c_0\}$. 
Multiplying $e^{(c_0+\epsilon)s}$ on \eqref{equality_2} and considering 
the result $c_0=\tilde c_0$ yield that 
\begin{equation*}
\begin{aligned}
e^{\epsilon s} [P_1^\ell(s)-\tilde P_1^\ell(s)]
 =&e^{(\epsilon+c_0-c_1)s}P_1^\ell(s)
 -e^{(\epsilon+c_0-\tilde c_1)s}\tilde P_1^\ell(s)\\
  &-\sum_{k=2}^{K}(e^{(\epsilon+c_0-c_{k-1})s}-e^{(\epsilon+c_0-c_k)s})
  P_k^\ell(s)\\
 &+\sum_{k=2}^{\tilde K}(e^{(\epsilon+c_0-\tilde c_{k-1})s}
 -e^{(\epsilon+c_0-\tilde c_k)s})\tilde P_k^\ell(s).
\end{aligned}
\end{equation*}
The similar proof gives that for $\ell=1,2$, 
\begin{equation}\label{equality_3}
\lim_{\substack{\re s\to\infty\\\arg{s}\in[0,\pi/2)}} 
e^{\epsilon s} [P_1^\ell(s)-\tilde P_1^\ell(s)]=0,
\end{equation}
which together with Lemma \ref{lem-exp_eigen2} gives 
$p_{1,n}=\tilde p_{1,n}=0,\ n\in\mathbb N^+$. This leads to 
$\|p_1\|_{L^2(\Omega)}=\|\tilde p_1\|_{L^2(\Omega)}=0$, which 
contradicts with Assumption \ref{condition}. So, $\alpha=\tilde\alpha$. 

Inserting $\alpha=\tilde\alpha$ into \eqref{equality_3} leads to 
$$
\lim_{\substack{\re s\to\infty\\\arg{s}\in[0,\pi/2)}} 
e^{\epsilon s} \sum_{n=1}^\infty a_n(z_\ell) 
 (p_{1,n}-\tilde p_{1,n})\lambda_n(s^\alpha+\lambda_n)^{-1}
=0,\ \ell=1,2.
$$
From this and Corollary \ref{cor-exp_eigen3}, it follows that 
$p_{1,n}-\tilde p_{1,n}=0,\ n\in\mathbb{N}^+$, namely, 
$\|p_1-\tilde p_1\|_{L^2(\Omega)}=0$. 
Now subtracting $e^{-c_0s}P_1^\ell(s)$ from \eqref{equality_2}, it holds that 
\begin{equation*}
\begin{aligned}
 &e^{-c_1s}[P_2^\ell(s)-P_1^\ell(s)]
 +e^{-\tilde c_1s}[\tilde P_1^\ell(s)-\tilde P_2^\ell(s)]\\
 &=e^{-c_2s}P_2^\ell(s)-e^{-\tilde c_2s}\tilde P_2^\ell(s)
 -\sum_{k=3}^{K}(e^{-c_{k-1}s}-e^{-c_ks})P_k^\ell(s)
 +\sum_{k=3}^{\tilde K}(e^{-\tilde c_{k-1}s}-e^{-\tilde c_ks})
 \tilde P_k^\ell(s).
\end{aligned}
\end{equation*}
Assume $c_1\ne\tilde c_1$ and let $c_1<\tilde c_1$ without loss of generality. 
Picking $\epsilon>0$ satisfying 
$\epsilon<\min\{\tilde c_1-c_1, c_2-c_1\}$, 
from the proof for showing $c_0=\tilde c_0$ we can derive that 
$p_{2,n}-p_{1,n}=0,\ n\in \mathbb{N}^+$, i.e. $\|p_1-p_2\|_{L^2(\Omega)}=0$, 
which contradicts with Assumption \ref{condition}. Hence, $c_1=\tilde c_1$.

Now we have $c_0=\tilde c_0,\ c_1=\tilde c_1,\ \alpha=\tilde\alpha, 
\ \|p_1-\tilde p_1\|_{L^2(\Omega)}=0$. Inserting them into 
\eqref{equality_2} gives that for $s\in\Lambda^+
\setminus\{(-\lambda_j)^{1/\alpha}\}_{j=1}^\infty,\ \ell=1,2,$
\begin{equation*}
 \sum_{k=2}^{K}(e^{-c_{k-1}s}-e^{-c_ks})P_k^\ell(s)
 =\sum_{k=2}^{\tilde K}(e^{- \tilde c_{k-1}s}-e^{-\tilde c_ks})
 \tilde P_k^\ell(s).
\end{equation*}
Following the proof above, we can deduce that $c_2=\tilde c_2,
\ \|p_2-\tilde p_2\|_{L^2(\Omega)}=0$. Continuing this procedure, we can get 
\begin{equation}\label{result}
 \alpha=\tilde\alpha,\ c_0=\tilde c_0,\ c_k=\tilde c_k,
 \ \|p_k-\tilde p_k\|_{L^2(\Omega)}=0,\ 1\le k\le \min\{K,\tilde K\}.
\end{equation}

Finally, we need to show $K=\tilde K$ (the case of 
$K=\infty,\ \tilde K=\infty$ is considered as $K=\tilde K$). 
Assume not, then we set $K<\tilde K$ without loss of generality. 
\eqref{equality_2} and \eqref{result} give that 
\begin{equation*}
\sum_{k=K+1}^{\tilde K}(e^{-\tilde c_{k-1}s}-e^{-\tilde c_ks})
 \tilde P_k^\ell(s)=0,\ \ s\in\Lambda^+\setminus\{(-\lambda_j)^{1/\alpha}\}_{j=1}^\infty,\ \ell=1,2.
\end{equation*}
Now pick $\epsilon>0$ fulfilling $\epsilon<\tilde c_{K+1}-\tilde c_K$, 
the previous proof will give $\|\tilde p_{K+1}\|_{L^2(\Omega)}=0,$ which is a contradiction. Therefore, $K=\tilde K$, which together 
with \eqref{result} completes the proof of Theorem \ref{thm-unique}. 
\end{proof}

\section{Concluding remark and future work}
\label{sec-rem}
In this work, we prove that sparse flux data on boundary can uniquely determine the order $\alpha$ and semi-discrete source term simultaneously. This is the theoretical basis for numerical reconstruction, which is one of our future work. 

In numerical aspect, we do not need to worry about condition \eqref{condi-z}, which seems impossible in programming. This is because that in practice we can only utilize finitely many eigenvalues, so that the index $|m|$ has an upper bound. By this and the proof of Lemma \ref{lemma_uniqueness_1}, condition \eqref{condi-z} can be weakened to the one that $(\theta_1-\theta_2)/\pi$ is not in a  subset of rational numbers. 

Furthermore, for the theoretical analysis, we will investigate this  inverse source problem with a more general source term, or consider  equation \eqref{eq-ibvp} in manifold.     
\section*{Acknowledgments}

The first author thanks National Natural Science Foundation of China 
11801326. The second author was supported by Academy of Finland, 
grants 284715, 312110 and the Atmospheric mathematics project of 
University of Helsinki.

\bibliographystyle{plainurl} 
\bibliography{fractional_source_sparse}

\begin{thebibliography}{10}

\bibitem{BWM00}
David~A Benson, Stephen~W Wheatcraft, and Mark~M Meerschaert.
\newblock Application of a fractional advection-dispersion equation.
\newblock {\em Water resources research}, 36(6):1403--1412, 2000.

\bibitem{BKMS02}
Brian Berkowitz, Joseph Klafter, Ralf Metzler, and Harvey Scher.
\newblock Physical pictures of transport in heterogeneous media:
  Advection-dispersion, random-walk, and fractional derivative formulations.
\newblock {\em Water resources research}, 38(10):9--1--9--12, 2002.

\bibitem{C09}
Jin Cheng, Junichi Nakagawa, Masahiro Yamamoto, and Tomohiro Yamazaki.
\newblock Uniqueness in an inverse problem for a one-dimensional fractional
  diffusion equation.
\newblock {\em Inverse Problems}, 25(11):115002, 16, 2009.
\newblock URL: \url{https://doi.org/10.1088/0266-5611/25/11/115002}, \href
  {http://dx.doi.org/10.1088/0266-5611/25/11/115002}
  {\path{doi:10.1088/0266-5611/25/11/115002}}.

\bibitem{DiNezzaPalatucciValdinoci:2012}
Eleonora Di~Nezza, Giampiero Palatucci, and Enrico Valdinoci.
\newblock Hitchhiker's guide to the fractional {S}obolev spaces.
\newblock {\em Bull. Sci. Math.}, 136(5):521--573, 2012.
\newblock URL: \url{https://doi.org/10.1016/j.bulsci.2011.12.004}, \href
  {http://dx.doi.org/10.1016/j.bulsci.2011.12.004}
  {\path{doi:10.1016/j.bulsci.2011.12.004}}.

\bibitem{GGR92}
Massimiliano Giona, Stefano Cerbelli, and H~Eduardo Roman.
\newblock Fractional diffusion equation and relaxation in complex viscoelastic
  materials.
\newblock {\em Physica A: Statistical Mechanics and its Applications},
  191(1-4):449--453, 1992.

\bibitem{GLY}
Rudolf Gorenflo, Yuri Luchko, and Masahiro Yamamoto.
\newblock Time-fractional diffusion equation in the fractional {S}obolev
  spaces.
\newblock {\em Fract. Calc. Appl. Anal.}, 18(3):799--820, 2015.
\newblock URL: \url{https://doi.org/10.1515/fca-2015-0048}, \href
  {http://dx.doi.org/10.1515/fca-2015-0048} {\path{doi:10.1515/fca-2015-0048}}.

\bibitem{GrebenkovNguyen:2013}
D.~S. Grebenkov and B.-T. Nguyen.
\newblock Geometrical structure of {L}aplacian eigenfunctions.
\newblock {\em SIAM Rev.}, 55(4):601--667, 2013.
\newblock URL: \url{https://doi.org/10.1137/120880173}, \href
  {http://dx.doi.org/10.1137/120880173} {\path{doi:10.1137/120880173}}.

\bibitem{HH98}
Yuko Hatano and Naomichi Hatano.
\newblock Dispersive transport of ions in column experiments: An explanation of
  long-tailed profiles.
\newblock {\em Water Resources Research}, 34(5):1027--1033, 1998.

\bibitem{HNJ13}
Yuko Hatano, Junichi Nakagawa, Shengzhang Wang, and Masahiro Yamamoto.
\newblock Determination of order in fractional diffusion equation.
\newblock {\em J. Math-for-Ind.}, 5A:51--57, 2013.

\bibitem{HelinLassasYlinenZhang:2019}
Tapio Helin, Matti Lassas, Lauri Ylinen, and Zhidong Zhang.
\newblock Inverse problems for heat equation and space-time fractional
  diffusion equation with one measurement.
\newblock {\em arXiv preprint arXiv:1903.04348}, 2019.

\bibitem{J16}
Daijun Jiang, Zhiyuan Li, Yikan Liu, and Masahiro Yamamoto.
\newblock Weak unique continuation property and a related inverse source
  problem for time-fractional diffusion-advection equations.
\newblock {\em Inverse Problems}, 33(5):055013, 22, 2017.
\newblock URL: \url{https://doi.org/10.1088/1361-6420/aa58d1}, \href
  {http://dx.doi.org/10.1088/1361-6420/aa58d1}
  {\path{doi:10.1088/1361-6420/aa58d1}}.

\bibitem{JR15}
Bangti Jin and William Rundell.
\newblock A tutorial on inverse problems for anomalous diffusion processes.
\newblock {\em Inverse Problems}, 31(3):035003, 40, 2015.
\newblock URL: \url{https://doi.org/10.1088/0266-5611/31/3/035003}, \href
  {http://dx.doi.org/10.1088/0266-5611/31/3/035003}
  {\path{doi:10.1088/0266-5611/31/3/035003}}.

\bibitem{KM11}
Mokhtar Kirane and Salman~A. Malik.
\newblock Determination of an unknown source term and the temperature
  distribution for the linear heat equation involving fractional derivative in
  time.
\newblock {\em Appl. Math. Comput.}, 218(1):163--170, 2011.
\newblock URL: \url{https://doi.org/10.1016/j.amc.2011.05.084}, \href
  {http://dx.doi.org/10.1016/j.amc.2011.05.084}
  {\path{doi:10.1016/j.amc.2011.05.084}}.

\bibitem{K08}
S.~C. Kou.
\newblock Stochastic modeling in nanoscale biophysics: Subdiffusion within
  proteins.
\newblock {\em The Annals of Applied Statistics}, 2(2):501--535, 2008.
\newblock \href {http://dx.doi.org/10.1214/07-AOAS149}
  {\path{doi:10.1214/07-AOAS149}}.

\bibitem{LB03}
Melissa Levy and Brian Berkowitz.
\newblock Measurement and analysis of non-fickian dispersion in heterogeneous
  porous media.
\newblock {\em Journal of contaminant hydrology}, 64(3-4):203--226, 2003.

\bibitem{LZJY13}
Gongsheng Li, Dali Zhang, Xianzheng Jia, and Masahiro Yamamoto.
\newblock Simultaneous inversion for the space-dependent diffusion coefficient
  and the fractional order in the time-fractional diffusion equation.
\newblock {\em Inverse Problems}, 29(6):065014, 36, 2013.
\newblock URL: \url{https://doi.org/10.1088/0266-5611/29/6/065014}, \href
  {http://dx.doi.org/10.1088/0266-5611/29/6/065014}
  {\path{doi:10.1088/0266-5611/29/6/065014}}.

\bibitem{LIY15}
Zhiyuan Li, Oleg~Yu. Imanuvilov, and Masahiro Yamamoto.
\newblock Uniqueness in inverse boundary value problems for fractional
  diffusion equations.
\newblock {\em Inverse Problems}, 32(1):015004, 16, 2016.
\newblock URL: \url{https://doi.org/10.1088/0266-5611/32/1/015004}, \href
  {http://dx.doi.org/10.1088/0266-5611/32/1/015004}
  {\path{doi:10.1088/0266-5611/32/1/015004}}.

\bibitem{LLY-hfca}
Zhiyuan Li, Yikan Liu, and Masahiro Yamamoto.
\newblock Inverse problems of determining parameters of the fractional partial
  differential equations.
\newblock In {\em Handbook of fractional calculus with applications. {V}ol. 2},
  pages 431--442. De Gruyter, Berlin, 2019.

\bibitem{LY15}
Zhiyuan Li and Masahiro Yamamoto.
\newblock Uniqueness for inverse problems of determining orders of multi-term
  time-fractional derivatives of diffusion equation.
\newblock {\em Appl. Anal.}, 94(3):570--579, 2015.
\newblock URL: \url{https://doi.org/10.1080/00036811.2014.926335}, \href
  {http://dx.doi.org/10.1080/00036811.2014.926335}
  {\path{doi:10.1080/00036811.2014.926335}}.

\bibitem{LY-hfca}
Zhiyuan Li and Masahiro Yamamoto.
\newblock Inverse problems of determining coefficients of the fractional
  partial differential equations.
\newblock In {\em Handbook of fractional calculus with applications. {V}ol. 2},
  pages 443--464. De Gruyter, Berlin, 2019.
\newblock URL: \url{https://doi.org/10.1142/11152}, \href
  {http://dx.doi.org/10.1142/11152} {\path{doi:10.1142/11152}}.

\bibitem{LionsMagenes:1972V1}
J.-L. Lions and E.~Magenes.
\newblock {\em Non-homogeneous boundary value problems and applications. {V}ol.
  {I}}.
\newblock Springer-Verlag, New York-Heidelberg, 1972.
\newblock Translated from the French by P. Kenneth, Die Grundlehren der
  mathematischen Wissenschaften, Band 181.

\bibitem{LLY-hfca2}
Yikan Liu, Zhiyuan Li, and Masahiro Yamamoto.
\newblock Inverse problems of determining sources of the fractional partial
  differential equations.
\newblock In {\em Handbook of fractional calculus with applications. {V}ol. 2},
  pages 411--429. De Gruyter, Berlin, 2019.
\newblock URL: \url{https://doi.org/10.1142/11152}, \href
  {http://dx.doi.org/10.1142/11152} {\path{doi:10.1142/11152}}.

\bibitem{L15}
Yikan Liu, William Rundell, and Masahiro Yamamoto.
\newblock Strong maximum principle for fractional diffusion equations and an
  application to an inverse source problem.
\newblock {\em Fract. Calc. Appl. Anal.}, 19(4):888--906, 2016.
\newblock URL: \url{https://doi.org/10.1515/fca-2016-0048}, \href
  {http://dx.doi.org/10.1515/fca-2016-0048} {\path{doi:10.1515/fca-2016-0048}}.

\bibitem{LiuZhang:2017}
Yikan Liu and Zhidong Zhang.
\newblock Reconstruction of the temporal component in the source term of a
  (time-fractional) diffusion equation.
\newblock {\em J. Phys. A}, 50(30):305203, 27, 2017.
\newblock URL: \url{https://doi.org/10.1088/1751-8121/aa763a}, \href
  {http://dx.doi.org/10.1088/1751-8121/aa763a}
  {\path{doi:10.1088/1751-8121/aa763a}}.

\bibitem{MWW07}
Marcin Magdziarz, Aleksander Weron, and Karina Weron.
\newblock Fractional fokker-planck dynamics: Stochastic representation and
  computer simulation.
\newblock {\em Physical Review E}, 75(1):016708, 2007.

\bibitem{MK00}
Ralf Metzler and Joseph Klafter.
\newblock The random walk's guide to anomalous diffusion: a fractional dynamics
  approach.
\newblock {\em Physics Reports}, 339(1):1--77, 2000.

\bibitem{P99}
Igor Podlubny.
\newblock {\em Fractional differential equations}, volume 198 of {\em
  Mathematics in Science and Engineering}.
\newblock Academic Press, Inc., San Diego, CA, 1999.
\newblock An introduction to fractional derivatives, fractional differential
  equations, to methods of their solution and some of their applications.

\bibitem{Pollard:1948}
Harry Pollard.
\newblock The completely monotonic character of the {M}ittag-{L}effler function
  {$E_a(-x)$}.
\newblock {\em Bull. Amer. Math. Soc.}, 54:1115--1116, 1948.
\newblock URL: \url{https://doi.org/10.1090/S0002-9904-1948-09132-7}, \href
  {http://dx.doi.org/10.1090/S0002-9904-1948-09132-7}
  {\path{doi:10.1090/S0002-9904-1948-09132-7}}.

\bibitem{RundellZhang:2018}
William Rundell and Zhidong Zhang.
\newblock Recovering an unknown source in a fractional diffusion problem.
\newblock {\em J. Comput. Phys.}, 368:299--314, 2018.
\newblock URL: \url{https://doi.org/10.1016/j.jcp.2018.04.046}, \href
  {http://dx.doi.org/10.1016/j.jcp.2018.04.046}
  {\path{doi:10.1016/j.jcp.2018.04.046}}.

\bibitem{RundellZhang:2019}
William Rundell and Zhidong Zhang.
\newblock On the identification of source term in the heat equation from sparse
  data.
\newblock {\em arXiv preprint arXiv:1908.02015}, 2019.

\bibitem{SakamotoYamamoto:2011}
Kenichi Sakamoto and Masahiro Yamamoto.
\newblock Initial value/boundary value problems for fractional diffusion-wave
  equations and applications to some inverse problems.
\newblock {\em J. Math. Anal. Appl.}, 382(1):426--447, 2011.
\newblock URL: \url{https://doi.org/10.1016/j.jmaa.2011.04.058}, \href
  {http://dx.doi.org/10.1016/j.jmaa.2011.04.058}
  {\path{doi:10.1016/j.jmaa.2011.04.058}}.

\bibitem{SY11-MCRF}
Kenichi Sakamoto and Masahiro Yamamoto.
\newblock Inverse source problem with a final overdetermination for a
  fractional diffusion equation.
\newblock {\em Math. Control Relat. Fields}, 1(4):509--518, 2011.
\newblock URL: \url{https://doi.org/10.3934/mcrf.2011.1.509}, \href
  {http://dx.doi.org/10.3934/mcrf.2011.1.509}
  {\path{doi:10.3934/mcrf.2011.1.509}}.

\bibitem{Siegel:2014}
Carl~L. Siegel.
\newblock \"{U}ber einige {A}nwendungen diophantischer {A}pproximationen
  [reprint of {A}bhandlungen der {P}reu\ss ischen {A}kademie der
  {W}issenschaften. {P}hysikalisch-mathematische {K}lasse 1929, {N}r. 1].
\newblock In {\em On some applications of {D}iophantine approximations},
  volume~2 of {\em Quad./Monogr.}, pages 81--138. Ed. Norm., Pisa, 2014.

\bibitem{Sok12}
Igor~M. Sokolov.
\newblock Models of anomalous diffusion in crowded environments.
\newblock {\em Soft Matter}, 8(35):9043--9052, 2012.

\bibitem{Uc13}
Vladimir~V. Uchaikin.
\newblock {\em Fractional Derivatives for Physicists and Engineers}, volume~1.
\newblock Springer, 2013.

\bibitem{Z11}
Ying Zhang and Xiang Xu.
\newblock Inverse source problem for a fractional diffusion equation.
\newblock {\em Inverse Problems}, 27(3):035010, 12, 2011.
\newblock URL: \url{https://doi.org/10.1088/0266-5611/27/3/035010}, \href
  {http://dx.doi.org/10.1088/0266-5611/27/3/035010}
  {\path{doi:10.1088/0266-5611/27/3/035010}}.

\end{thebibliography}

\end{document}